\newtheorem{theorem}{Theorem}
\newtheorem{proposition}{Proposition}
\newtheorem{conjecture}{Conjecture}
\newtheorem{remark}{Remark}
\title[]{Circum- and Inconic Invariants of\\3-Periodics in the Elliptic Billiard}
\author{Dan Reznik}
\address{Dan Reznik,
Data Science Consulting,\\
Rio de Janeiro, RJ, Brazil}
\email{dan@dat-sci.com}
\author{Ronaldo Garcia}
\address{Ronaldo Garcia,
Inst. de Matemática e Estatística,\\
Univ. Federal de Goiás,\\
Goiânia, GO, Brazil}
\email{ragarcia@ufg.br}
\begin{document}
\maketitle
\begin{abstract}
A Circumconic passes through a triangle's vertices; an Inconic is tangent to the sidelines. We study the variable geometry of certain conics derived from the 1d family of 3-periodics in the Elliptic Billiard. Some display intriguing invariances such as aspect ratio and pairwise ratio of focal lengths.

\vskip .3cm
\noindent\textbf{Keywords} elliptic billiard, periodic trajectories, triangle center, circumconic, circumellipse, circumhyperbola, conservation, invariance, invariant.
\vskip .3cm
\noindent \textbf{MSC} {51M04 \and 37D50  \and 51N20 \and 51N35\and 68T20}
\end{abstract}

\section{Introduction}
\label{sec:intro}
Given a triangle, a {\em circumconic} passes through its three vertices and satifies two additional constraints, e.g., center or perspector\footnote{Where reference and polar triangles are perspective \cite{mw}.} location. An {\em Inconic} touches each side and is centered at a specified location. Both these objects are associated\footnote{The Isogonal and Isotomic conjugation of such conics are lines \cite[Perspector]{mw}.} with simple lines on the plane \cite[Circumconic,Inconic]{mw} and therefore lend themselves to agile algebraic manipulation.

We study properties and invariants of such conics derived from a 1d family of triangles: 3-periodics in an Elliptic Billiard (EB): these are triangles whose bisectors coincide with normals to the boundary (bounces are elastic), see Figure~\ref{fig:three-orbits-proof}.

\begin{figure}
    \centering
    \includegraphics[width=.66\textwidth]{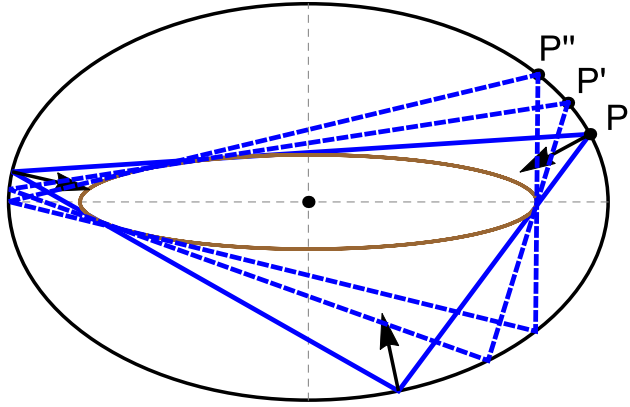}
    \caption{3-periodics (blue) in the Elliptic Billiard (EB, black): normals to the the boundary at vertices (black arrows) are bisectors. The family is constant-perimeter and envelopes a confocal Caustic (brown). This family conserves the ratio inradius-to-circumradius and has a stationary Mittenpunkt at the EB center. \textbf{Video}: \cite[PL\#01]{reznik2020-playlist-circum}.}
    \label{fig:three-orbits-proof}
\end{figure}

Amongst all planar curves, the EB is uniquely integrable \cite{kaloshin2018}. It can be regarded as a special case of Poncelet's Porism \cite{dragovic11}. These two propeties imply two classic invariances: $N$-periodics have constant perimeter and envelop a confocal Caustic. The seminal work is \cite{sergei91} and more recent treatments include \cite{lynch2019-billiards,rozikov2018}. 

We have shown 3-periodics also conserve the Inradius-to-Circumradius\footnote{As does the Poristic Family \cite{gallatly1914-geometry}.} ratio which implies an invariant sum of cosines, and that their {\em Mittenpunkt}\footnote{Where lines drawn from each Excenter thru sides' midpoints meet.} is stationary at the EB center \cite{reznik2020-intelligencer}. Indeed many such invariants have been effectively generalized for $N>3$ \cite{akopyan2020-invariants,bialy2020-invariants}.

We have also studied the loci of 3-periodic Triangle Centers over the family: out of the first 100 listed in \cite{etc}, 29 sweep out ellipses (a remarkable fact on its own) with the remainder sweeping out higher-order curves \cite{garcia2020-ellipses}. Related is the study of  loci described by the Triangle Centers of the Poristic Triangle family \cite{odehnal2011-poristic}. We have also showed \cite{reznik2020-circumbilliard} the aspect ratio of the {\em Circumbilliard} (a circumellipse which is an EB for a generic triangle) is invariant over the 1d family of {\em  Poristic}, triangles, with fixed Incircle and Circumcircle \cite{weaver1927-poristic,odehnal2011-poristic}. See \cite[PL\#07]{reznik2020-playlist-circum}.

\smallskip
\noindent \textbf{Main Results}:

\begin{itemize}
    \item Theorem~\ref{thm:axis-ratio} in Section~\ref{sec:circumellipses}: The ratio of semi-axis of the $X_1$-centered Circumellipse is invariant over the 3-periodic family. We conjecture this to be the case for a 1d-family of circumellipses.
    \item Theorem~\ref{thm:focal-ratio} in Section~\ref{sec:circumhyperbolae}: The focal lengths of two special circumhyperbola (Feuerbach and Excentral Jerabek) is constant over the 3-periodic family.
    \item Theorems~\ref{thm:excIncX5} and ~\ref{thm:excIncX3} and in Section~\ref{sec:inconic} show the aspect ratios of two important Excentral Inconics are invariant and that one of the Inconics is a $90^\circ$-rotated copy of the $X_1$-centered Circumellipse.
\end{itemize}

A reference table with all Triangle Centers, Lines, and Symbols appears in Appendix~\ref{app:symbols}. Videos of many of the experiments are assembled on Table~\ref{tab:playlist} in Section~\ref{sec:conclusion}.


\section{Invariants in Circumellipses}
\label{sec:circumellipses}
Let the boundary of the EB satisfy:

\begin{equation}
\label{eqn:billiard-f}
f(x,y)=\left(\frac{x}{a}\right)^2+\left(\frac{y}{b}\right)^2=1.
\end{equation}

Where $a>b>0$ denote the EB semi-axes throughout the paper. Below we use {\em aspect ratio} as the ratio of an ellipse's semi-axes. When referring to Triangle Centers we adopt Kimberling $X_i$ notation \cite{etc}, e.g., $X_1$ for the Incenter, $X_2$ for the Barycenter, etc., see Table~\ref{tab:kimberling} in Appendix~\ref{app:symbols}.

The following five-parameter equation is assumed for all circumconics not passing through $(0,0)$.

\begin{equation}
1 + c_1 x + c_2 y + c_3 x y + c_4 x^2 + c_5 y^2=0
\label{eqn:e0}
\end{equation}

\noindent In Appendix~\ref{app:circum-linear} we provide a method to compute the $c_i$ given 3 points a conic must pass through as well as it center.

The Medial Triangle divides the plane in 7 regions, see  Figure~\ref{fig:midlines}. The following is a known fact  \cite{akopyan2007-conics,odehnal2015-conics}:

\begin{remark}
If the center of a Circumconic lies within 4 of these (resp. the remainder 3), the conic will be an Ellipse (resp. Hyperbola).
\end{remark}

\begin{figure}
    \centering
    \includegraphics[width=\textwidth]{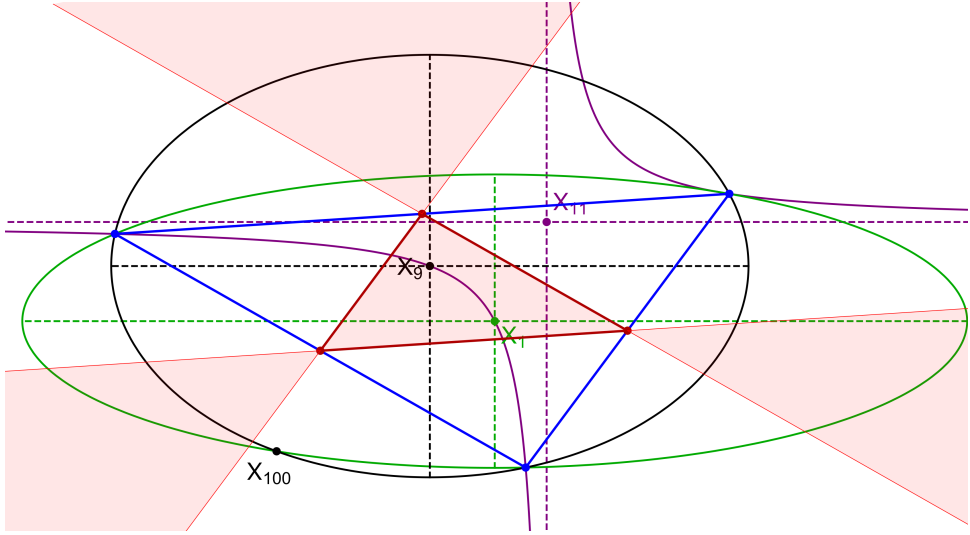}
    \caption{A reference triangle is shown (blue) as well as its Medial (red). The latter sides divide the plane into 7 regions, including the Medial' s interior. When a Circumconic center lies on any of the shaded regions (resp. unshaded) it is an Ellipse (resp. Hyperbola). Parabolas have centers at infinity. For illustration, the $X_1$ and $X_9$-centered Circumellipses and the $X_{11}$-centered Feuerbach Hyperbola are shown. Note that over the family of 3-periodics, a given Circumconic may alternate between Ellipse and Hyperbola, e.g., when centered on $X_4$, $X_5$, $X_6$, etc.}
    \label{fig:midlines}
\end{figure}

Centers $X_1$, $X_2$, and $X_9$ are always interior to the Medial Triangle \cite{etc}, so the Circumconics $E_i,i=1,2,9$ centered on them will ellipses, Figure~\ref{fig:circumX1X2}. $E_2$ is the {\em Steiner Circumellipse}, least-area over all possible Circumellipses \cite{mw}, and $E_9$ is the Elliptic Billiard.

It is known that $E_1$ intersects the EB and the Circumcircle at $X_{100}$, the Anticomplement of the Feuerbach Point. Also that $E_2$ intersects $E_9$ at $X_{190}$, the Yff Parabolic Point \cite{dekov14,etc}. These two ellipses intersect at $X_{664}$\footnote{This is the isogonal conjugate of $X_{663}$, i.e., $\mathcal{L}_{663}$ mentioned before is coincidentally its {\em Trilinear Polar} \cite{mw}.} \cite{moses2020-private-circumconic}.

Given a generic triangle $T$:

\begin{proposition}
The axes of $E_1$  are parallel to $E_9$'s. 
\end{proposition}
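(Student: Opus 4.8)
The plan is to reduce the claim to showing that, in a frame adapted to $E_9$, the circumellipse $E_1$ carries no mixed $xy$ term. From equation~\eqref{eqn:e0} a circumconic has quadratic part $c_4x^2+c_3xy+c_5y^2$, so its axis directions are the eigendirections of $\left(\begin{smallmatrix} c_4 & c_3/2\\ c_3/2 & c_5\end{smallmatrix}\right)$, fixed by $\tan 2\theta = c_3/(c_4-c_5)$. Two circumconics therefore have parallel axes exactly when the vectors $(c_4-c_5,\,c_3)$ are proportional, i.e.
\[
 c_3^{(1)}\bigl(c_4^{(9)}-c_5^{(9)}\bigr)-c_3^{(9)}\bigl(c_4^{(1)}-c_5^{(1)}\bigr)=0 .
\]
First I would choose Cartesian coordinates along the axes of $E_9$. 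Since $E_9$ is centered at the Mittenpunkt $X_9$, it is the Circumbilliard of $T$, i.e.\ the unique circumellipse having $T$ as a billiard $3$-periodic (recalled in the Introduction). In this frame $E_9$ is axis-aligned, so $c_3^{(9)}=0$, and for generic (non-equilateral) $T$ one has $c_4^{(9)}\neq c_5^{(9)}$; the displayed condition then collapses to the single requirement $c_3^{(1)}=0$. Thus it suffices to prove that the $X_1$-centered circumellipse of a billiard $3$-periodic is axis-aligned.

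Next I would make this explicit. Writing the vertices as $P_k=(a\cos\theta_k,\,b\sin\theta_k)$ on $E_9$ (semi-axes $a,b$) subject to the Poncelet/billiard closure relations among the $\theta_k$, I would compute the incenter in closed form (the barycentric $X_1=(a_1:a_2:a_3)$, with $a_k$ the side lengths, converted to Cartesian), and then feed the three vertices together with the center $X_1$ into the linear construction of Appendix~\ref{app:circum-linear} to obtain the $c_i^{(1)}$. The goal is to show that $c_3^{(1)}$, as a function of the $\theta_k$, vanishes identically once the closure relation is imposed. Concluding $c_3^{(1)}=0$ makes the quadratic part of $E_1$ diagonal in the $E_9$-frame, so its axes are horizontal and vertical, i.e.\ parallel to those of $E_9$.

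I expect the decisive step to be this symbolic simplification, because the vanishing of the cross term is not forced by any symmetry of the individual triangle: a generic $3$-periodic has no reflective symmetry, and reflecting $T$ across an axis of $E_9$ produces a \emph{different} $3$-periodic whose $E_1$ is the reflected (hence oppositely tilted) ellipse, so no contradiction arises from symmetry alone. The cancellation must instead come from substituting the closure constraint (equivalently, the confocal-caustic tangency of the three sides) into the rational expression for $c_3^{(1)}$. An equally valid but heavier alternative, which avoids the billiard reduction entirely, is to treat the six vertex coordinates of an arbitrary $T$ as indeterminates, assemble $c_3,c_4,c_5$ for both conics from the Appendix construction using $X_1=(a_1:a_2:a_3)$ and $X_9=(a_1(a_2+a_3-a_1):\cdots)$, and certify that the displayed determinant is the zero polynomial; the main obstacle is the same in spirit, namely performing and verifying this elimination.
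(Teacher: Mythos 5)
Your reduction is sound: parallelism of axes is exactly proportionality of the vectors $(c_4-c_5,\,c_3)$ for the two quadratic parts, and every triangle is a $3$-periodic of its own circumbilliard, so working in the $E_9$-aligned frame and proving $c_3^{(1)}=0$ does suffice. Where you diverge from the paper is in how you propose to certify the vanishing. The paper's proof (Appendix~\ref{app:circum-x1x2x9}) is precisely the route you set aside as ``heavier'': it normalizes an \emph{arbitrary} triangle to $P_1=(0,0)$, $P_2=(1,0)$, $P_3=(u,v)$, builds $E_1$ and $E_9$ from the linear system of Appendix~\ref{app:circum-linear}, and checks that the two quadratic forms $q_1,q_9$ cutting out the axis directions are proportional --- a polynomial identity in only the two parameters $u,v$ (plus the side lengths $s_1,s_2$), verified by CAS. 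Your primary route, by contrast, parametrizes the vertices as $(a\cos\theta_k,b\sin\theta_k)$ ``subject to the Poncelet/billiard closure relations,'' and you assert that the cancellation \emph{must} come from substituting the closure constraint. That is misleading: the closure constraint is needed in your setup only to force $X_9$ to the center of the ambient ellipse, but the parallelism itself is a triangle-intrinsic fact requiring no billiard input --- indeed both $X_1$ and $X_9$ lie on the medial Feuerbach hyperbola $F_{med}$, so both circumellipses belong to Moses' parallel-axis pencil discussed in Section~\ref{sec:circumellipses}. Practically, the closure relations among the $\theta_k$ are transcendental and would make your elimination far harder than the paper's two-parameter polynomial check; so your fallback is the right computation and your preferred route is the needlessly difficult one. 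Neither you nor the paper exhibits the final simplification in full, but the paper at least records the explicit forms of $E_1$, $E_9$, $q_1$, $q_9$ from which the proportionality can be read off.
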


The proof is in Appendix~\ref{app:circum-x1x2x9}. 

\begin{theorem}
Over the 3-periodic family, the major $\eta_1'$ (resp. minor $\eta_1$) semiaxis lengths of $E_1$ are variable, though their ratio is invariant. These are given by:

\begin{align*}
\eta_1'=&R+d,\;\;\;\eta_1=R-d\\ 
\frac{\eta_1'}{\eta_1}
=&\frac{R+d}{R-d}=\frac{1+\sqrt{1-2\rho}}{\rho}-1 > 1
\end{align*}
\label{thm:axis-ratio}
\end{theorem}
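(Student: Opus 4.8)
The plan is to split the claim into a general-triangle fact about the metric geometry of $E_1$ and a one-line specialization that invokes the 3-periodic invariant. Writing $O=X_3$ for the circumcenter, $R$ for the circumradius, $r$ for the inradius, and $d=|X_1X_3|$ for the incenter--circumcenter distance, I would first prove that for \emph{any} triangle the semiaxes of $E_1$ are $\eta_1'=R+d$ and $\eta_1=R-d$; the invariance over the family then drops out of Euler's relation $d^2=R^2-2Rr$ together with the already-established constancy of $\rho=r/R$.

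For the general fact, since $E_1$ is centered at $X_1$ I may write it as $(P-X_1)^{\!\top}M\,(P-X_1)=1$ for a symmetric positive-definite $2\times2$ matrix $M$ whose entries are the quadratic coefficients $c_3,c_4,c_5$ of~\eqref{eqn:e0} recentered at $X_1$ (the linear part vanishes by the centering condition, computed by the linear method of Appendix~\ref{app:circum-linear}). The semiaxes are $1/\sqrt{\lambda_i}$ for the eigenvalues $\lambda_i$ of $M$, so it suffices to show $\operatorname{tr}(M^{-1})=\eta_1'^2+\eta_1^2=2(R^2+d^2)$ and $\det(M^{-1})=(\eta_1'\eta_1)^2=(R^2-d^2)^2$; equivalently $\eta_1'+\eta_1=2R$ and $\eta_1'\eta_1=R^2-d^2=2Rr$, whose roots are exactly $R\pm d$. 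I would set $O$ at the origin, scale to $R=1$, and parametrize the circumcircle as the unit circle with vertices $A=u^2,\,B=v^2,\,C=w^2$ (so $|u|=|v|=|w|=1$), for which $X_1=-(uv+vw+wu)$ and hence $d^2=|uv+vw+wu|^2$. The convenient simplification is that the recentered vertex factors as $A-X_1=(u+v)(u+w)$ (cyclically for $B,C$) and that $\overline{A-X_1}=(A-X_1)/(u^2vw)$; substituting the conic in complex form $\mathfrak p\,|z-X_1|^2+2\operatorname{Re}\!\big(\mathfrak q\,(z-X_1)^2\big)=1$ turns the three incidence conditions into quadratic interpolation for the coefficients $(\mathfrak q,\mathfrak p,\bar{\mathfrak q})$, after which $\eta_1'^2=1/(\mathfrak p-2|\mathfrak q|)$ and $\eta_1^2=1/(\mathfrak p+2|\mathfrak q|)$.

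The main obstacle is the symmetric-function reduction: one must show the interpolated coefficients collapse to $\mathfrak p=(1+d^2)/(1-d^2)^2$ and $|\mathfrak q|=d/(1-d^2)^2$ (with $R=1$), using $\bar u=1/u$ and $d^2=s_1s_2/s_3$ in the elementary symmetrics $s_1,s_2,s_3$ of $u,v,w$. This is routine but lengthy and is best discharged by a computer algebra system; the same computation can instead be run directly on an explicit rational parametrization of the 3-periodic vertices fed to the Appendix~\ref{app:circum-linear} solver. I would also record the sanity check that $R\pm d$ are the extreme distances from $X_1$ to the circumcircle, while noting that these extremes are \emph{not} attained along $X_1X_3$, so the identity is genuinely metric and not a statement about the axis directions.

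Finally, to conclude invariance over the family, I invoke that $\rho=r/R$ is conserved for 3-periodics. Euler's relation gives $d=R\sqrt{1-2\rho}$, so
\[
\frac{\eta_1'}{\eta_1}=\frac{R+d}{R-d}=\frac{1+\sqrt{1-2\rho}}{1-\sqrt{1-2\rho}}=\frac{(1+\sqrt{1-2\rho})^2}{2\rho}=\frac{1+\sqrt{1-2\rho}}{\rho}-1,
\]
after rationalizing with denominator $1-(1-2\rho)=2\rho$. Since $r>0$ forces $d<R$ (indeed $\rho\le\tfrac12$ by Euler's inequality, with equality only in the equilateral case $d=0$), the ratio exceeds $1$; and as it depends only on the invariant $\rho$ it is constant along the family, while the individual lengths $R\pm d$ vary because $R$ and $d$ do. This establishes the theorem.
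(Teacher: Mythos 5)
Your proof is correct and takes a genuinely different route from the paper's. The paper obtains a candidate expression by computing $E_1$ for an isosceles orbit and then verifies with a CAS, over the full one-parameter family of orbits (parametrized by $P_1(t)$ on the EB), that the ratio never changes. You instead isolate a statement about an \emph{arbitrary} triangle --- that the $X_1$-centered circumellipse always has semiaxes $R+d$ and $R-d$ --- prove it by interpolation in the complex circumcircle parametrization $A=u^2$, $B=v^2$, $C=w^2$, $X_1=-(uv+vw+wu)$, and then get invariance over 3-periodics as a one-line corollary of Euler's relation $d^2=R^2-2Rr$ and the previously established conservation of $\rho=r/R$. (I checked your symmetric-function targets: $\mathfrak p\mp2|\mathfrak q|=(1\mp d)^{-2}$ with $\mathfrak p=(1+d^2)/(1-d^2)^2$, $|\mathfrak q|=d/(1-d^2)^2$, and the identity $(1+\sqrt{1-2\rho})^2/(2\rho)=(1+\sqrt{1-2\rho})/\rho-1$ all work out; the $3$--$4$--$5$ triangle gives semiaxes $\tfrac52\pm\sqrt{5/4}$ as predicted.) Both arguments ultimately lean on a CAS for the messy elimination, but yours buys more: the formula $\eta_1'=R+d$, $\eta_1=R-d$ is established for every triangle, so the invariance transfers for free to any family conserving $r/R$ (e.g.\ the poristic family of \cite{reznik2020-poristic}), and the computation is decoupled from the explicit billiard vertex formulas. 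Two small repairs: your parenthetical that the extreme distances $R\pm d$ from $X_1$ to the circumcircle are ``not attained along $X_1X_3$'' is backwards --- the nearest and farthest points of the circumcircle from $X_1$ do lie on line $X_1X_3$; what is true (and worth saying instead) is that the \emph{axes of $E_1$} are not along $X_1X_3$ but parallel to the EB axes, so the coincidence of the semiaxis lengths with these extremal distances is purely metric. And for the strict inequality $\eta_1'/\eta_1>1$ you need $d>0$, i.e.\ that the 3-periodic is never equilateral, which holds because $a>b$.
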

\noindent Where $d=|X_1X_3|=\sqrt{R(R-2r)}$.


\begin{proof}
Calculate the ratio using vertex locations (see \cite{garcia2020-new-properties}) for an isosceles orbit, and then verify with a Computer Algebra System (CAS) the expression holds over the entire family. A related proof appears in \cite{reznik2020-poristic}.
\end{proof}

Note: experimentally $\eta_1'$ is maximal (resp. minimal) when the 3-periodic is an isosceles with axis of symmetry parallel to the EB's minor (resp. major) axis.

\begin{proposition}
The axes of $E_2$ are only parallel to $E_9$ if $T$ is isosceles.
\end{proposition}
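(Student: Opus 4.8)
The elliptic billiard $E_9$ has its axes along the coordinate axes, so ``parallel to $E_9$'' means that the principal axes of the Steiner circumellipse $E_2$ are horizontal and vertical; equivalently, in the conic form (\ref{eqn:e0}) for $E_2$ the mixed coefficient $c_3$ vanishes. The plan is to convert this vanishing into a clean analytic condition and then test it against the 3-periodic constraint. Passing to complex coordinates, I would write the vertices as $z_k = a\cos t_k + \mathrm{i}\,b\sin t_k$, $k=1,2,3$, and recall that by Marden's theorem the foci of the Steiner inellipse (a $\tfrac12$-scaled, centroid-concentric copy of $E_2$) are the roots of $p'(z)$, where $p(z)=\prod_k (z-z_k)$. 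Hence the axis direction of $E_2$ is $\arg\sqrt{Q}$ with
$$Q = z_1^2+z_2^2+z_3^2 - z_1 z_2 - z_2 z_3 - z_3 z_1 ,$$
which is (up to a constant) the discriminant of $p'$. The axes of $E_2$ are parallel to those of $E_9$ precisely when $\sqrt{Q}$ is real or purely imaginary, i.e. when $\operatorname{Im}Q = 0$. The excluded case $Q=0$ is the equilateral triangle, whose Steiner ellipse is a circle; this is also isosceles, so it does not affect the statement.

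Next I would record the direction that is not asserted but is worth noting for orientation: if $T$ is isosceles then, by the reflective symmetry of the EB, it is symmetric about one of the coordinate axes. Substituting the corresponding symmetry (either $z_1\in\mathbb{R}$ with $z_2=\bar z_3$, or $z_1\in\mathrm{i}\,\mathbb{R}$ with $z_2=-\bar z_3$) into $Q$ shows term by term that $Q\in\mathbb{R}$, so the axes are indeed parallel. This is the easy half and uses only the ambient symmetry, not the full billiard condition.

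The substance of the proposition is the converse, parallel $\Rightarrow$ isosceles. Here I would insert the explicit one-parameter billiard parametrization of the vertices $t_k=t_k(\tau)$ used in the proof of Theorem~\ref{thm:axis-ratio} (from \cite{garcia2020-new-properties}), form $\operatorname{Im}Q$ as a function of the single orbit parameter $\tau$ and of $a,b$, and show via CAS that its zero set over the family consists exactly of the finitely many axis-symmetric (isosceles) orbits.

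The main obstacle is precisely this last step: reducing $\operatorname{Im}Q$ to a tractable trigonometric expression in $\tau$ and proving that it has no spurious zeros beyond the isosceles configurations, so that the analytic condition $\operatorname{Im}Q=0$ and the geometric condition ``$T$ isosceles'' cut out the same locus. Ruling out an additional, non-symmetric orbit is the part that genuinely invokes the billiard constraint rather than generic triangle geometry, and it is where I would expect the bulk of the computation to concentrate.
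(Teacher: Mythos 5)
Your reduction via Marden's theorem is attractive and correct as far as it goes: with $E_9$ equal to the EB, axis-parallelism of $E_2$ is indeed equivalent to $\operatorname{Im}Q=0$ for $Q=z_1^2+z_2^2+z_3^2-z_1z_2-z_2z_3-z_3z_1$, and your check of the unasserted direction (isosceles $\Rightarrow$ parallel) is fine. But the direction the proposition actually asserts --- parallel $\Rightarrow$ isosceles --- is exactly the part you leave as a plan: ``form $\operatorname{Im}Q$ as a function of $\tau$ and show via CAS that its zero set consists exactly of the isosceles orbits.'' That is the entire content of the statement, and you flag it yourself as the main obstacle without carrying it out; as written this is an outline with a gap where the proof should be.

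Moreover, your route makes the problem harder than it is. The paper's proof (Appendix~\ref{app:circum-x1x2x9}) treats a \emph{generic} triangle, not a 3-periodic: it places the vertices at $(0,0)$, $(1,0)$, $(u,v)$, computes $E_2$ and the $X_9$-centered circumconic symbolically, extracts the quadratic forms $q_2,q_9$ whose zeros give the axis directions, and finds these are proportional iff $(u-1)^2+v^2=1$ or $u^2+v^2=1$, i.e.\ iff two sidelengths coincide. No billiard parametrization, no transcendental functions of $\tau$, and no spurious non-symmetric orbits to rule out: the statement is a fact of affine triangle geometry, and the billiard constraint you expected to be essential never enters. To salvage your approach, replace the coordinate-axis condition $\operatorname{Im}Q=0$ by the condition that $\sqrt{Q}$ be parallel to an axis of the $X_9$-circumconic of the same generic triangle; the computation then closes without any dynamics. (A side remark: the paper's stated ``iff'' appears to omit the symmetric factor $u=1/2$, i.e.\ $s_1=s_2$, but this affects only the converse, not the implication the proposition claims.)
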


See Appendix~\ref{app:circum-x1x2x9}.



\begin{figure}
    \centering
    \includegraphics[width=\textwidth]{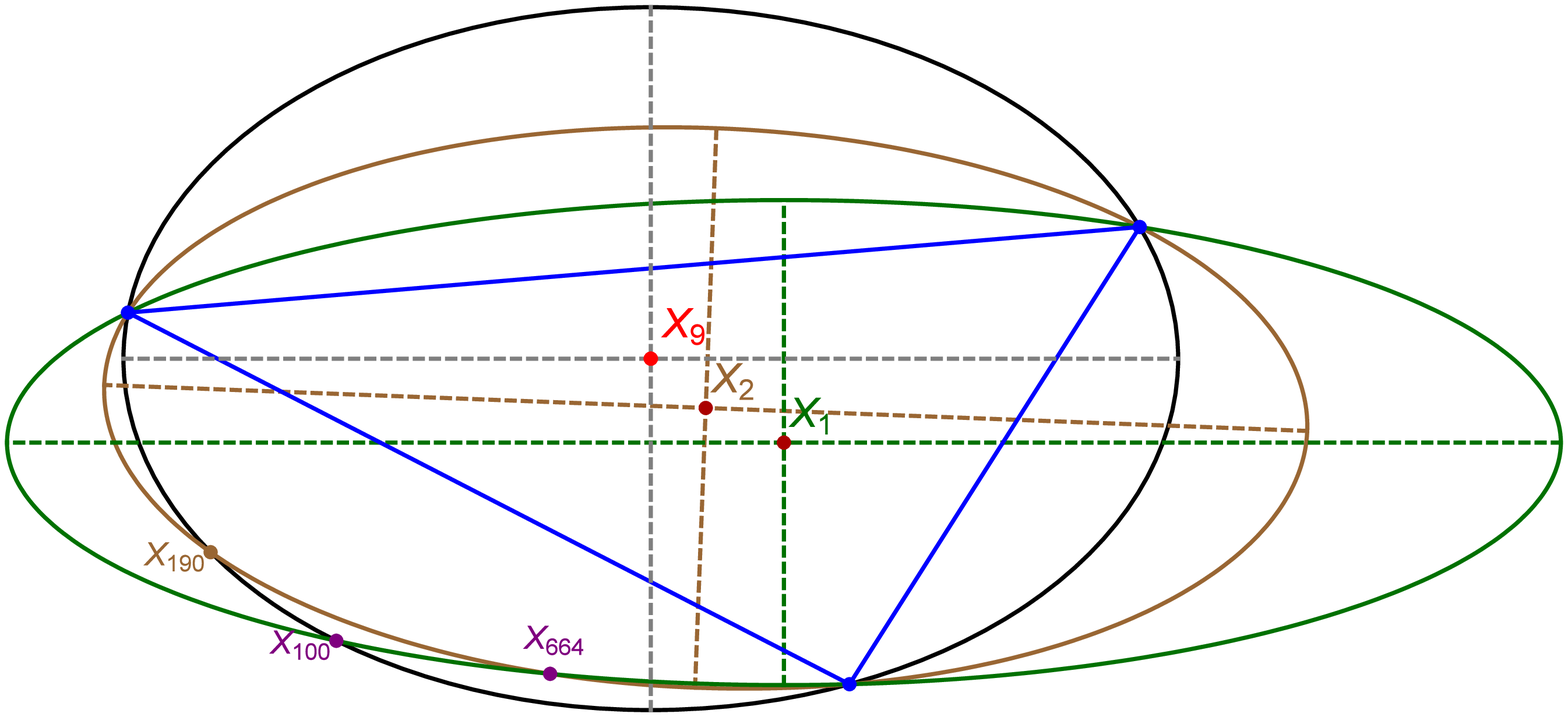}
    \caption{$E_1$ (green) and $E_2$ (brown) are Circumellipses to the orbit (blue) centered on $X_1$ (green) and $X_2$ (brown). The former (resp.~latter) intersect the EB at $X_{100}$ (resp.~$X_{190}$). They intersect at the vertices and at $X_{664}$. $E_1$ axes remain parallel to the EB over the orbit family, and the ratio of their lengths is constant. The axes of $E_2$ are only parallel to the EB's when the orbit is an isosceles. \textbf{Video}: \cite[PL\#08]{reznik2020-playlist-circum}.}
    \label{fig:circumX1X2}
\end{figure}

\subsection{Parallel-Axis Pencil}

The Feuerbach Circumhyperbola of a Triangle is a rectangular hyperbola\footnote{Since it passes through the Orthocenter $X_4$ \cite{mw}.} centered on $X_{11}$ \cite{mw}.
Peter Moses has contributed a stronger generalization \cite{moses2020-private-circumconic}:

\begin{remark}
The pencil of Circumconics whose centers lie on the Feuerbach Circumhyperbola $F_{med}$ of the Medial Triangle have mutually-parallel axes. 
\end{remark}

The complement\footnote{The 2:1 reflection of a point about $X_2$.} of $X_{11}$ is $X_{3035}$ \cite{etc} so $F_{med}$ is centered there, see Figure~\ref{fig:circum_parallel}. The following is a list of Circumellipses whose centers lie on $F_{med}$ \cite{moses2020-private-circumconic}: $X_i$,  $i$=1, 3, 9, 10\footnote{Notice $X_{10}$ is the Incenter of the Medial. Interestingly, $X_8$, the Incenter of the ACT, does not belong to this select group.}, 119, 142, 214, 442, 600, 1145, 2092, 3126, 3307, 3308, 3647, 5507, 6184, 6260, 6594, 6600, 10427, 10472, 11517, 11530, 12631, 12639, 12640, 12864, 13089, 15346, 15347, 15348, 17057, 17060, 18258, 18642, 19557, 19584, 22754, 34261, 35204.

\begin{proposition}
\label{th:9}
A circumellipse has  center on $F_{med}$ iff it passes through $X_{100}$.
\end{proposition}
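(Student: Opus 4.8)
The plan is to recognize the circumconics of $T$ passing through $X_{100}$ as a \emph{pencil} and to invoke the classical theorem on the locus of their centers. Requiring a circumconic (a conic through the vertices $A,B,C$ of $T$) to pass through the fixed fourth point $X_{100}$ leaves a one-parameter family, namely the pencil of conics through the four base points $A,B,C,X_{100}$. It is classical that the centers of the conics of such a pencil trace a conic $\mathcal{C}$, the nine-point (Newton) conic of the complete quadrangle, and that $\mathcal{C}$ passes through the six midpoints of the segments joining the four base points. Three of those are the midpoints of $BC$, $CA$, $AB$, i.e. the vertices of the Medial Triangle; hence $\mathcal{C}$ is itself a circumconic of the Medial Triangle.

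Next I would observe that a circumconic of $T$ is uniquely determined by its center: the space of conics through $A,B,C$ is two-parameter, and prescribing the center imposes two generically independent conditions. This makes the assignment (center) $\mapsto$ (circumconic) a bijection, so that a circumconic passes through $X_{100}$ if and only if its center lies on $\mathcal{C}$. Both implications of the stated equivalence --- and in particular its restriction to ellipses --- then follow at once, \emph{provided} $\mathcal{C}$ is identified with $F_{med}$.

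That identification is the main step. Both $\mathcal{C}$ and $F_{med}$ are circumconics of the Medial Triangle, so they already share its three vertices; it suffices to exhibit two further common points in general position. For this I would use two circumconics already known to contain $X_{100}$: the $X_1$-centered ellipse $E_1$ (which the text notes meets both the Circumcircle and the EB at $X_{100}$) and the Elliptic Billiard $E_9$ itself (which therefore also passes through $X_{100}$). Their centers $X_1$ and $X_9$ thus lie on $\mathcal{C}$, while both appear in Moses' list of centers on $F_{med}$. Five common points in general position force $\mathcal{C}=F_{med}$, completing the argument.

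The delicate points to watch are the genericity hypotheses: one must check that $A,B,C,X_{100}$ are in general position (no three collinear, $X_{100}$ not a vertex) and that the five identifying points are likewise non-degenerate, so that both the center-locus theorem and five-point rigidity apply. A fully computational alternative, better suited to CAS verification, is to write the circumconic in barycentrics as $u\,yz+v\,zx+w\,xy=0$ with center $[\,u(v+w-u):v(w+u-v):w(u+v-w)\,]$, express passage through $X_{100}$ as a single linear relation in $(u,v,w)$, eliminate the parameters to obtain the equation of the center locus, and compare it with the barycentric equation of the Feuerbach hyperbola of the Medial Triangle.
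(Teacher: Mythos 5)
Your argument is correct in outline but takes a genuinely different route from the paper's. The paper works in explicit coordinates ($A=(u,v)$, $B=(-1,0)$, $C=(1,0)$): it writes down the equation of $F_{med}$ and a one-parameter family $E_b$ of circumconics through the three vertices and $X_{100}$, computes the center $(x_c,y_c)$ of $E_b$, and checks $F_{med}(x_c,y_c)=0$ with a CAS, disposing of the converse with ``the reciprocal follows similarly.'' You instead invoke the nine-point (Newton) conic of the complete quadrangle $A,B,C,X_{100}$ to see \emph{a priori} that the center locus $\mathcal{C}$ of the pencil is a circumconic of the Medial Triangle, use the (generic) uniqueness of a circumconic with prescribed center to convert ``lies in the pencil'' into ``center lies on $\mathcal{C}$'' in both directions, and then pin down $\mathcal{C}=F_{med}$ by five-point rigidity from the known memberships $X_1,X_9\in F_{med}$ and $X_{100}\in E_1\cap E_9$ (facts the paper cites independently of this proposition, so there is no circularity). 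Your version is more conceptual, explains \emph{why} the center locus is a medial circumconic at all, and treats the converse more honestly than the paper's one-line remark; its cost is reliance on the classical center-locus theorem and on the genericity caveats you already flag (no three of the five identifying points collinear, $X_1\neq X_9$, i.e.\ the triangle not equilateral, and the center map nondegenerate). Your proposed barycentric elimination at the end is essentially a cleaner, coordinate-free version of what the paper's appendix actually carries out.
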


A proof appears in Appendix~\ref{app:ce_parallel}. The following has been observed experimentally:

\begin{conjecture}
Over the family of 3-periodics, all Circumellipses in Moses' pencil conserve the ratio of their axes.
\label{conj:moses}
\end{conjecture}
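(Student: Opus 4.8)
The plan is to reduce the conjecture to a single scalar invariance of triangle-center coordinates, exploiting the pencil structure together with Theorem~\ref{thm:axis-ratio}. First I would record that every member of Moses' pencil is axis-aligned with the EB. By Moses' parallel-axis Remark all members share one pair of axis directions, and two particular members are $E_1$ and the billiard $E_9$ itself (its center $X_9$, the Mittenpunkt, lies at the origin and on $F_{med}$); since $E_9$'s axes are the coordinate axes, every member has its axes parallel to them. Hence in the form~\eqref{eqn:e0} each member has $c_3=0$, and its squared aspect ratio is simply $c_4/c_5$, the ratio of the $x^2$- and $y^2$-coefficients. The problem therefore becomes: show $c_4/c_5$ is invariant for each member.

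Next I would use the pencil explicitly. By Proposition~\ref{th:9} the pencil consists of all circumconics through the four points (the three vertices and $X_{100}$), so it is spanned by the two axis-aligned base conics $E_1$ and $E_9$: a general member is $\lambda E_1+\mu E_9$. Writing $\kappa_1$ and $\kappa_9=b^2/a^2$ for the (signed) coefficient ratios $c_4/c_5$ of $E_1$ and $E_9$, a short computation shows the member's aspect ratio is a M\"obius function of a single pencil parameter; eliminating that parameter through the center coordinates of $\lambda E_1+\mu E_9$ then collapses it to
\[
\frac{c_4}{c_5}=\kappa_9+(\kappa_1-\kappa_9)\,\frac{y_k}{y_1},
\]
where $y_k,y_1$ are the $y$-coordinates of the member's center $X_k$ and of the incenter $X_1$. (The analogous formula in terms of $x_k/x_1$ covers centers with $y_k=0$, such as $X_9$ itself.) This is consistent with the two known members: it returns $\kappa_9$ at $X_9$ and $\kappa_1$ at $X_1$, the latter invariant by Theorem~\ref{thm:axis-ratio}.

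The conjecture is thus equivalent to: for every center $X_k$ on $F_{med}$ the ratio $y_k/y_1$ is invariant over the 3-periodic family (since $\kappa_9$ is constant and $\kappa_1$ is invariant by Theorem~\ref{thm:axis-ratio}). For each named center in the list I would verify this following the method proving Theorem~\ref{thm:axis-ratio}: insert the explicit orbit vertices of \cite{garcia2020-new-properties} and the barycentric expression for $X_k$, evaluate $y_k/y_1$ on an isosceles orbit, and confirm with a CAS that it stays constant along the family. Geometrically this reflects the loci of $X_k$ and $X_1$ being vertically synchronized copies centered at the EB center, which the constancy of $r/R$ and the confocal caustic make plausible.

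The main obstacle is upgrading these per-center checks to the full continuous pencil. The list is only a sample of the points of $F_{med}$, whereas the conjecture asserts invariance for every member; one must show $y_k/y_1$ is invariant as $X_k$ ranges over all of $F_{med}$, including centers whose loci are not ellipses. The clean route would be to parametrize $F_{med}$ and prove $y_k/y_1$ invariant as a function of both the $F_{med}$-parameter and the orbit, but this requires controlling how $F_{med}$ itself deforms with the family --- precisely the computation that keeps the statement a conjecture. A secondary point needing care is checking that the major axis of $E_1$ does not swap coordinate directions along the family, so that $\kappa_1=c_4/c_5$ (and not merely the length ratio $\eta_1'/\eta_1$) is genuinely invariant.
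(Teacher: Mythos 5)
This statement is labelled a \emph{conjecture} in the paper and is supported there only by experimental observation; the authors give no proof, so there is nothing to compare your argument against except the claim itself. Judged on its own terms, your proposal is not a proof but a (correct and genuinely useful) reduction. I checked your key elimination: writing the pencil as $E_t=(1-t)E_9+tE_1$ in the normalization of~\eqref{eqn:e0}, the constant term stays equal to $1$, the center satisfies $y_k=-t\,c_2^{(1)}/(2c_5(t))$, and a short computation confirms
\[
\frac{c_4(t)}{c_5(t)}=\kappa_9+(\kappa_1-\kappa_9)\,\frac{y_k}{y_1},
\]
so the conjecture is indeed equivalent (modulo the axis-swapping caveat you flag) to the invariance of $y_k/y_1$ over the 3-periodic family for every center $X_k$ on $F_{med}$. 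This is a nice structural observation that the paper does not make: it ties Conjecture~\ref{conj:moses} directly to Theorem~\ref{thm:axis-ratio} and to the geometry of the loci of the centers on $F_{med}$.

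The genuine gap is the one you name yourself: the pencil is a continuum, and your verification strategy (per-center CAS checks against the explicit orbit vertices) can only ever certify the finitely many named centers in Moses' list, not the full one-parameter family. To close the conjecture you would need to prove that $y_k/y_1$ is constant simultaneously in the pencil parameter and the orbit parameter --- equivalently, that the map sending the orbit to the curve $F_{med}$ deforms in a way that scales all center ordinates by the same factor as $y_1$. That is exactly the computation that is missing, and without it the statement remains a conjecture; your write-up should therefore be presented as a reformulation plus partial evidence, not as a proof. A secondary point: Proposition~\ref{th:9} is stated for circumellipses, so if you invoke it to identify the pencil with the conics through the three vertices and $X_{100}$, you should note that the identification is being used only for the elliptic members, which is all the conjecture concerns.
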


\begin{figure}
    \centering
    \includegraphics[width=\textwidth]{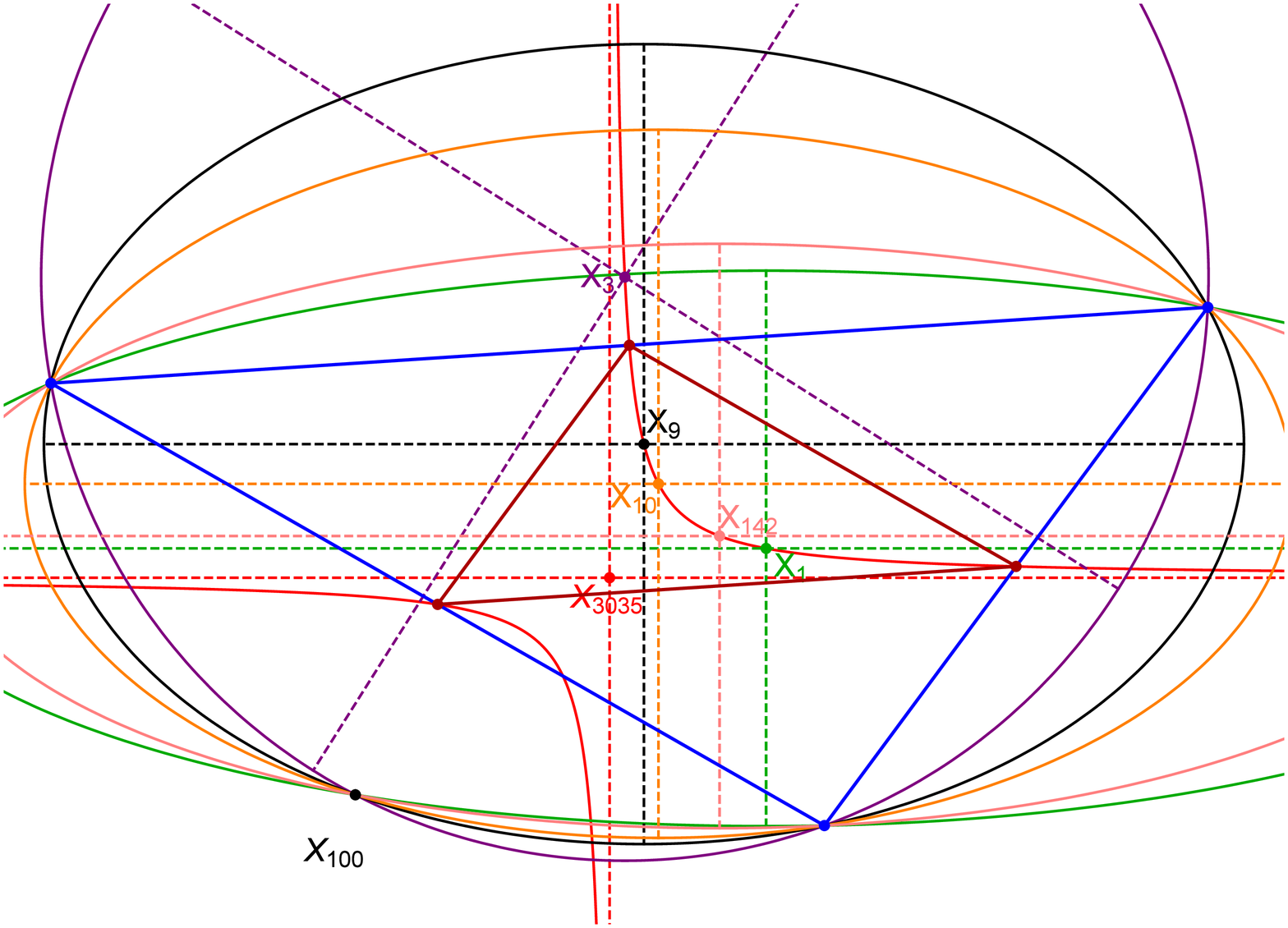}
    \caption{An $a/b=1.5$ EB is shown (black) centered on $X_9$ as well as a sample 3-periodic (blue). Also shown are Circumellipses centered on $X_i$, $i=$1, 3, 10, 142, whose centers lie on the Feuerbach Circumhyperbola of the Medial Triangle (both shown red), centered on $X_{3035}$, the complement of $X_{11}$. Notice all conics drawn (including the Circumhyp). have axes parallel to the EB and all Circumellipses pass through $X_{100}$. Note: the Circumellipse centered on $X_3$ is the Circumcircle, its axes, drawn diagonally, are immaterial.}
    \label{fig:circum_parallel}
\end{figure}

\section{A Special Pair of  Circumhyperbolae}
\label{sec:circumhyperbolae}

Here we study invariants of two well-known Circumhyperbolae\footnote{Its centers lie in the unshaded regions in Figure~\ref{fig:midlines}.}: the Feuerbach and Jerabek Hyperbolas $F$ and $J$ \cite[Jerabek Hyperbola]{mw}. Both are rectangular since they contain $X_4$ \cite{mw}. The former is centered on $X_{11}$ and the latter on $X_{125}$. With respect to 3-periodics no invariants have been detected for $J$. However, the Jerabek $J_{exc}$ of the Excentral Triangle, which passes through the Excenters and is centered on $X_{100}$\footnote{The Excentral's $X_{125}$ \cite{mw}.}, does produce interesting invariants\footnote{The Feuerbach Hyperbola $F_{exc}$ has not yet yielded any detectable invariants over the 3-periodic family.}. 
$F$ is known to pass through $X_1$ and $X_9$ of its reference triangle. Interestingly
$J_{exc}$ also passes through $X_1$ and $X_9$. This stems from the fact that $J$ passes through $X_4$ and $X_6$. Since the Excentral Triangle is always acute \cite{coxeter67}, its $X_4$ is $X_1$. Likewise, the excentral $X_6$ is $X_9$. 

The Isogonal Conjugate of a Circumconic is a line \cite[Circumconic]{mw}. Remarkably:

\begin{remark}
The Isogonal conjugate of $F$ with respect to a reference triangle and that of $J_{exc}$ with respect to the Excentral one is line $X_1X_3=\mathcal{L}_{650}$.
\end{remark}

The first part is well-known \cite[Feuerbach Hyperbola]{mw}. For the second part, consider that $J$ is the Isogonal Conjugate of the Euler Line \cite[Jerabek Hyperbola]{mw}. The Euler Line of the Excentral Triangle passes through its $X_4$ and $X_5$ which are $X_1$ and $X_3$ in the reference 3-periodic.

Referring to Figure~\ref{fig:circumhyps}:

\begin{proposition}
$J_{exc}$ intersects $E_9$ in exactly two locations.
\end{proposition}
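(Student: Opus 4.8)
The plan is to combine B\'ezout's bound with a real-root count. Since $E_9$ is an ellipse and $J_{exc}$ is a (rectangular) hyperbola, the two are distinct irreducible conics sharing no common component, so by B\'ezout they meet in exactly four points counted with multiplicity over $\mathbb{C}$. The statement therefore reduces to showing that precisely two of these four points are real.

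First I would exhibit two real intersections by an interior argument. Recall $E_9$ is the EB itself, so the orbit is inscribed in $E_9$ and its incenter $X_1$ lies strictly inside the region bounded by $E_9$, while the center $X_9$ of $E_9$ is trivially interior as well. By the identifications noted above (the Excentral's $X_4$ and $X_6$ are the reference $X_1$ and $X_9$, and $J_{exc}$ passes through both), these two interior points lie on $J_{exc}$. Hence $J_{exc}$ carries points interior to $E_9$. A branch of a hyperbola is an unbounded connected arc whose two ends escape to infinity, i.e.\ outside the bounded ellipse; a branch containing an interior point must therefore cross $E_9$ an even number of times $\ge 2$. This already forces at least two real intersection points.

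To upgrade ``at least two'' to ``exactly two'', I would eliminate one coordinate between the two conic equations (the $c_i$-form of \eqref{eqn:e0} for $J_{exc}$ against \eqref{eqn:billiard-f} for $E_9$) to obtain a single quartic in the remaining coordinate, and show this quartic has exactly two real roots over the whole family. Following the method used elsewhere in the paper, I would anchor the computation at the isosceles orbit, where the vertex and excenter coordinates are explicit, verify there that the quartic splits into two real roots and one complex-conjugate pair, and then confirm with a CAS that this count is constant along the one-parameter family. Concretely this amounts to checking that the relevant discriminant of the quartic keeps a fixed sign (never vanishes) as the orbit varies, so that no pair of real roots can collide and leave the real axis; equivalently, that $J_{exc}$ never becomes tangent to $E_9$ nor develops a second transversal pair of crossings.

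The hard part will be exactly this constancy step. The interior argument guarantees two real intersections but cannot by itself exclude a second branch (or the same branch re-entering) producing four; ruling that out rigorously requires controlling the quartic's discriminant globally over the family rather than at a single sample. This is where the CAS verification, or a continuity argument pinned to the isosceles extremes together with nonvanishing of the discriminant, does the real work.
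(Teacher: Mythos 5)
Your overall strategy is the same as the paper's: reduce the intersection to a quartic by elimination and count real roots via the sign of its discriminant (a quartic with real coefficients has exactly one pair of real roots and one conjugate pair iff its discriminant is negative). The interior-point argument giving ``at least two'' is a nice supplement the paper does not need, and your continuity reasoning --- nonvanishing discriminant along a connected family forces a constant real-root count --- is sound in principle. But two points in your execution plan need repair. First, your proposed anchor is exactly the wrong configuration: for an isosceles 3-periodic $J_{exc}$ degenerates to a pair of orthogonal lines (the paper notes $\lambda'=\lambda=0$ there), and correspondingly the discriminant vanishes at that point, so you cannot read off the generic root count from the isosceles orbit. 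You would need to anchor at a scalene sample (e.g.\ the right-triangle orbit used elsewhere in the paper). Second, you leave the global nonvanishing/sign-constancy of the discriminant as an unexecuted CAS sweep over the billiard family, which is the entire content of the claim.

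The paper avoids both issues by working in trilinears with general sidelengths $s_1,s_2,s_3$: writing $J_{exc}$ via its perspector $X_{649}$ and $E_9$ via $X_1$, eliminating one coordinate, and computing the discriminant of the resulting quartic in closed form as $-432\,[\,(s_2-s_3)(s_1-s_3)(s_1-s_2)\cdots\,]^2$, i.e.\ manifestly $-432$ times a perfect square. This makes the negativity (for scalene triangles) visible by inspection, settles the count in one stroke, and in fact proves the statement for an arbitrary scalene triangle rather than only along the 3-periodic family. If you carry out your elimination symbolically in the sidelengths rather than in the billiard parameter $t$, you will land on the same factorization and your argument closes.
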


\begin{proof}
Let $s_i,i=1,2,3$ refer to 3-periodic sidelengths. The perspector of $J_{exc}$ is $X_{649}=s_1(s_2-s_3)::$ (cyclical) \cite{etc}. Therefore the trilinears $x:y:z$ of $J_{exc}$ satisfy \cite{yiu2003}:
 \[J_{exc}: {s_1(s_2-s_3)}x^2+s_2(s_3-s_1)y^2+s_3(s_1-s_2)z^2=0.\]
 
Notice the above is satisfied for the Excenters $[1:1:-1],\; [1:-1:1]$ and $ [-1:1:1]$. As $X_1=[1:1:1]$ and $X_9=s_2+s_3-s_1::$ (cyclical) it follows that
 $J_{exc}(X_1)=J_{exc}(X_9)=0$.
 
 Eliminating variable $x$, the intersection of $J_{exc}=0$ and $E_9=0$ is given by the quartic:
 
 \begin{align*}
 &s_2(s_1-s_3)k_1 y^4+2s_2(s_1-s_3){k_1}{k_2}y^3 z\\
 +&2s_3(s_1-s_2){k_1}{k_2}y z^3+s_3(s_1-s_2)k_1 z^4=0
 \end{align*}
 
With $k_1=(s_1+s_2-s_3)^2$ and $k_2=s_1+s_3-s_2$. The discriminant of the above equation is:
 
 \[
 -432[(s_2-s_3)(s_1-s_3)(s_1-s_2)(s_1+s_3-s_2)^2(s_1-s_2-s_3)^2(s_1+s_2-s_3)^2(s_1s_2s_3)]^2
 \]
 
Since it is negative, there will be two real and two complex solutions \cite{burnside_1960}.
\end{proof}

\begin{proposition}
$F$ intersects the $X_9$-centered Circumellipse at $X_{1156}$.
\end{proposition}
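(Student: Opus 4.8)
The plan is to exploit that both $F$ and $E_9$ are circumconics of the reference $3$-periodic triangle. Every circumconic passes through the three vertices, so $F$ and $E_9$ already share those three points; two distinct conics meet in at most four points (B\'ezout), hence there is a single additional common point, and the assertion is precisely that this fourth intersection is $X_{1156}$. This reduces the problem from a generic quartic elimination, as in the $J_{exc}\cap E_9$ proof above (where $J_{exc}$ is a circumconic of the \emph{excentral} triangle and so shares no vertex with $E_9$), to locating the fourth intersection of two circumconics of one and the same triangle, which is purely algebraic.

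First I would record both conics in trilinears $x:y:z$ through their perspectors, using that a circumconic with trilinear perspector $(l:m:n)$ is $l\,yz+m\,zx+n\,xy=0$. The Feuerbach hyperbola $F$ has perspector $X_{650}$, whose trilinears are $(s_2-s_3)(s_2+s_3-s_1):(s_3-s_1)(s_3+s_1-s_2):(s_1-s_2)(s_1+s_2-s_3)$. For $E_9$ I would first pin down its perspector: applying the center--perspector correspondence to the barycentric circumconic $s_1\,yz+s_2\,zx+s_3\,xy=0$ gives center $s_1(s_2+s_3-s_1)::\,=X_9$, so $E_9$ is the circumconic with perspector $X_1$, which in trilinears reads simply $yz+zx+xy=0$. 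This is consistent with $E_9$ being the Elliptic Billiard.

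Next I would compute the fourth point. Regarding $yz$, $zx$, $xy$ as unknowns in the two linear relations $l_i\,yz+m_i\,zx+n_i\,xy=0$, their ratios are the cross product of $(l_1,m_1,n_1)=(1,1,1)$ with the trilinears $(l_2,m_2,n_2)$ of $X_{650}$, and the point itself is the coordinatewise reciprocal of that cross product. Since the $E_9$-perspector is $(1:1:1)$, this collapses to the reciprocals of the pairwise differences of the trilinears of $X_{650}$:
\[
\Big(\tfrac{1}{n_2-m_2}:\tfrac{1}{l_2-n_2}:\tfrac{1}{m_2-l_2}\Big).
\]
A one-line check that $(n_2-m_2)+(l_2-n_2)+(m_2-l_2)=0$ confirms this point satisfies $yz+zx+xy=0$, i.e.\ lies on $E_9$; it then remains to simplify the three coordinates and match them to the tabulated trilinears of $X_{1156}$ in \cite{etc}.

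The main obstacle is not the cross product, which is immediate, but the closing symmetric-function simplification: each coordinate reduces to a form such as $1/\big(2s_1^2-s_1(s_2+s_3)-(s_2-s_3)^2\big)$, which does not factor over the rationals, so recognizing it as $X_{1156}$ is cleanest to certify with a CAS, in keeping with the paper's methodology. Equivalently, because $X_{1156}\in F$ is already tabulated in \cite{etc}, it suffices to verify the single polynomial identity expressing $X_{1156}\in E_9$; one should also confirm that this fourth intersection is real and distinct from the vertices, which holds generically over the $3$-periodic family.
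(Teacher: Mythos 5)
Your proposal is correct and follows essentially the same route as the paper: both write $F$ and $E_9$ in perspector form in trilinears (perspectors $X_{650}$ and $X_1=(1:1:1)$ respectively) and reduce the claim to a symmetric-polynomial identity against the tabulated coordinates of $X_{1156}$. The only difference is cosmetic --- the paper verifies that $X_{1156}=1/[(s_2-s_3)^{2}+s_1(s_2+s_3-2s_1)]::$ satisfies both equations, whereas you derive the fourth intersection as the reciprocal of $(1,1,1)\times X_{650}$ and then identify it; your coordinate $1/\bigl(2s_1^2-s_1(s_2+s_3)-(s_2-s_3)^2\bigr)$ is the negative of the paper's, hence the same projective point.
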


\begin{proof}
The perspector of $X_9$ is $X_1$ and that of $X_{11} $ is $X_{650}=(s_3-s_3)(s_3+s_3-s_1)::$ (cyclical). Therefore, the trilinears $x:y:z$ of $F$ and $E_9$ satisfy:

\begin{align*}
    F:&(s_2-s_3)(s_2+s_3-s_1)/x+\\
    &(s_3-s_1)(s_3+s_1-s_2)/y+\\
    &(s_1-s_2)(s_1+s_2-s_3)/z=0 \\
    E_9:& 1/x+1/y+1/z=0.
\end{align*} 

$X_{1156}$ is given by $1/[(s_2-s_3)^{2}+s_1( s_2+s_3-2s_1)]::$ (cyclic). This point can be readily checked to satisfy both of the above.
\end{proof}
 

Given a generic triangle $T$, the following two claims are known:

\begin{proposition}
The asymptotes of both $F$ and $J_{exc}$ are parallel to the $X_9$-centered circumconic, i.e., $c_4$ and $c_5$ in \eqref{eqn:e0} vanish.
\end{proposition}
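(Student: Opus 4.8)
The plan is to reduce the claim, for each of the two hyperbolas, to a single trace identity between quadratic forms. Since asymptotic directions, and the axes of a central conic, depend only on the degree-two part of its defining polynomial, let $Q_9,Q_F,Q_{J_{exc}}$ denote the $2\times 2$ symmetric matrices of quadratic coefficients of $E_9$, $F$, $J_{exc}$ in some fixed orthonormal frame; for a conic written as in \eqref{eqn:e0} this matrix is $\left(\begin{smallmatrix} c_4 & c_3/2\\ c_3/2 & c_5\end{smallmatrix}\right)$, so the assertion that ``$c_4$ and $c_5$ vanish'' (in a frame aligned with the axes of $E_9$) says precisely that the quadratic part of the hyperbola is a pure $xy$ term, i.e. its asymptotes run along the two coordinate axes, which are the axes of $E_9$.

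First I would record the elementary criterion that drives everything. The axes of the ellipse $E_9$ point along the eigenvectors of $Q_9$, while the asymptotes of a hyperbola point along the null directions of its quadratic form. Diagonalising $Q_9=\operatorname{diag}(\lambda_1,\lambda_2)$ with $\lambda_1\neq\lambda_2$ (so $E_9$ is not a circle) and writing a trace-free $Q_H=\left(\begin{smallmatrix} a & b/2\\ b/2 & -a\end{smallmatrix}\right)$, the null directions of $Q_H$ are the coordinate axes iff $a=0$, while $\operatorname{tr}(Q_HQ_9)=a(\lambda_1-\lambda_2)$ vanishes iff $a=0$. Because $\operatorname{tr}(Q_HQ_9)$ is invariant under the orthogonal change of frame used to diagonalise $Q_9$, this yields a frame-independent test: a rectangular hyperbola $H$ has asymptotes parallel to the axes of $E_9$ iff $\operatorname{tr}(Q_H)=0$ and $\operatorname{tr}(Q_HQ_9)=0$.

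The first condition comes for free: $F$ and $J_{exc}$ are rectangular hyperbolas, each passing through the orthocenter of its defining triangle ($X_4$ of the reference triangle, and for the always-acute excentral triangle an orthocenter that coincides with the reference $X_1$), so $\operatorname{tr}(Q_F)=\operatorname{tr}(Q_{J_{exc}})=0$. Thus the whole proposition collapses to the two mixed-trace identities $\operatorname{tr}(Q_FQ_9)=0$ and $\operatorname{tr}(Q_{J_{exc}}Q_9)=0$. To check these I would embed the reference triangle from its sidelengths, e.g. $V_2=(0,0)$, $V_3=(s_1,0)$ with $V_1$ fixed by the remaining two lengths, push the trilinear equations of $E_9$, $F$ and $J_{exc}$ displayed above through the affine trilinear-to-Cartesian map (or equivalently feed the perspectors $X_1$, $X_{650}$, $X_{649}$ into the circumconic construction of Appendix~\ref{app:circum-linear}), read off the three quadratic-coefficient matrices, and verify that the two traces vanish identically as polynomials in $s_1,s_2,s_3$.

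The hard part will be bookkeeping rather than conceptual. The quantity $\operatorname{tr}(Q_HQ_9)$ is invariant only under orthogonal changes of frame, not under the general affine substitution carrying trilinear into Cartesian coordinates, so one cannot evaluate the traces in the convenient trilinear frame and must first commit to a genuine Euclidean embedding. Controlling the resulting expressions for $Q_9,Q_F,Q_{J_{exc}}$ in the $s_i$ is where the symbolic weight sits; once that is in place, the vanishing of the two mixed traces is a routine CAS simplification that settles both hyperbolas at once.
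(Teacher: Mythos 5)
Your argument is correct, but it follows a genuinely different route from the paper's main proof. The paper argues synthetically and separately for each hyperbola: the confocal Caustic is the (stationary) Mandart Inellipse $I_9$ of the 3-periodic family, and the Mandart Inellipse is known (Gibert) to have axes parallel to the asymptotes of the Feuerbach hyperbola, so confocality of the Caustic with the EB settles $F$; likewise, since 3-periodics are the orthic triangles of their excentrals, the EB is the excentral Orthic Inconic, whose axes are known to be parallel to the asymptotes of the Jerabek hyperbola, settling $J_{exc}$. Your reduction to the two scalar identities $\operatorname{tr}(Q_FQ_9)=\operatorname{tr}(Q_{J_{exc}}Q_9)=0$ (with $\operatorname{tr}(Q_F)=\operatorname{tr}(Q_{J_{exc}})=0$ coming free from rectangularity) is instead close in spirit to the paper's \emph{alternate} algebraic proof in Appendix~\ref{app:circum-x1x2x9}, which places the triangle at $(0,0)$, $(1,0)$, $(u,v)$, writes out explicit implicit equations for $E_9$, $F$, $J_{exc}$, and checks parallelism of the relevant direction pairs directly. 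Your trace criterion is a cleaner, frame-invariant packaging of that computation: it converts ``parallel axes'' into the vanishing of one polynomial per conic, treats both hyperbolas uniformly, and you rightly flag that the mixed trace survives only orthogonal changes of frame, so the trilinear data must first be pushed into a genuine Euclidean embedding. What the synthetic proof buys is an explanation of \emph{why} the parallelism holds and independence from symbolic computation (at the cost of citing two inconic facts); what yours buys is self-containedness and a smaller final verification. Two small caveats to record explicitly: the criterion needs $E_9$ noncircular (you note $\lambda_1\neq\lambda_2$; it holds here since $a>b$), and the concluding polynomial identities are deferred to a CAS rather than exhibited --- the same level of deferral as the paper's own ``straightforward to show.''
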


\begin{proof}
To see the first part, consider that since the Caustic is centered on $X_9$ and tangent to the 3-periodics, it is the (stationary) Mandart Inellipse $I_9$ of the family \cite{mw}. This inconic is known to have axes parallel to the asymptotes of $F$ \cite{gibert2004-mandart}. Since the Caustic is confocal with the EB, $F$ asymptotes must be parallel to the EB axes.

Secondly, 3-periodics are the Orthic Triangles of the Excentrals, therefore the EB is the (stationary) Excentral's Orthic Inconic \cite{mw}. The latter's axes are known to be parallel to the asymptotes of the Jerabek hyperbola. \cite[Orthic Inconic]{mw}.

An alternate, algebraic proof appears in Appendix \ref{app:circum-x1x2x9}.
\end{proof}

\noindent Let $\lambda$ (resp. $\lambda'$) be the focal length of $F$ (resp. $J_{exc}$).

\begin{remark}
Isosceles 3-periodics have $\lambda'=\lambda=0$.
\end{remark}

To see this consider the sideways isosceles 3-periodic with $P_1=(a,0)$. $P_2$ and $P_3$ will lie on the 2nd and 3rd quadrants at $(-a_c,{\pm}y')$, where $a_c=a(\delta-b^2)/(a^2-b^2)$ is the length of the Caustic major semi-axis \cite{garcia2020-ellipses}. $X_1$ and $X_4$ will lie along the 3-periodic's axis of symmetry, i.e., the x-axis. To pass through all 5 points, $F$ degenerates to a pair of orthogonal lines: the x-axis and the vertical line $x=-a_c$. The foci will collapse to the point $(-a_c,0)$. A similar degeneracy occurs for the upright isosceles, i.e., when $P_1=(0,b)$, namely, the foci collapse to $(0,-b_c)$, where $b_c=b(a^2-\delta)/(a^2-b^2)$ is the Caustic minor semi-axis length.

\begin{theorem}
For all non-isosceles 3-periodics, $\lambda'/\lambda$ is invariant and given by:
\label{thm:focal-ratio}
\end{theorem}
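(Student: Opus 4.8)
The plan is to reduce the focal length of each rectangular hyperbola to a single scalar and then exploit the stationarity of $X_9$. By the preceding Proposition, both $F$ and $J_{exc}$ are rectangular with asymptotes parallel to the EB axes, so in the EB-aligned frame each has the reduced form $(x-x_c)(y-y_c)=k$ (equivalently $c_4=c_5=0$ in \eqref{eqn:e0}), whose focal length depends only on $|k|$: writing $u=x-x_c$, $v=y-y_c$ gives $uv=k$, and a $45^\circ$ rotation turns this into $u'^2-v'^2=2k$, a rectangular hyperbola with $c^2=4|k|$. Hence the inter-focal distance is $4\sqrt{|k|}$, with the \emph{same} prefactor for $F$ and $J_{exc}$ since both are rectangular and aligned to the same axes. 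Consequently $\lambda'/\lambda=\sqrt{|k_J|/|k_F|}$, and the whole problem collapses to computing the two constants $k_F,k_J$.

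Second, I would pin down $k_F$ and $k_J$ using incidences already recorded in the text, rather than the full conic equations. Since the family keeps $X_9$ stationary at the EB center, in the EB-aligned frame $X_9=(0,0)$. Both $F$ and $J_{exc}$ pass through $X_9$, and they are centered at $X_{11}$ and $X_{100}$ respectively; substituting the origin into $(x-x_c)(y-y_c)=k$ yields immediately $k_F=X_{11,x}\,X_{11,y}$ and $k_J=X_{100,x}\,X_{100,y}$, the products of the Cartesian coordinates of the two centers. Therefore
\[ \frac{\lambda'}{\lambda}=\sqrt{\frac{|X_{100,x}\,X_{100,y}|}{|X_{11,x}\,X_{11,y}|}}. \]
The common incidence at $X_1$ provides a free cross-check, since it forces each center to lie on the line $x/X_{1,x}+y/X_{1,y}=1$.

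Third, the remaining task is to show this quotient is constant over the family. I would insert the explicit 3-periodic vertex parametrization of \cite{garcia2020-new-properties}, convert the barycentric coordinates of $X_{11}$ (the Feuerbach point) and $X_{100}$ into the EB-aligned frame, form the two coordinate products, and simplify the ratio with a CAS, using the family constraints (constant $R,r$, constant perimeter, and the confocal-caustic relation tying the orbit parameter to $a,b$) to collapse it to a constant expressible in the invariants. As in the proof of Theorem~\ref{thm:axis-ratio}, the candidate value can first be read off from one convenient scalene orbit and then verified to hold identically; note the isosceles configuration cannot be used to fix the constant directly, since there $k_F,k_J\to0$ simultaneously, so the value must be taken as the common limit of the ratio or extracted from a generic orbit.

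The main obstacle is algebraic rather than conceptual. The Feuerbach point $X_{11}$ and $X_{100}$ have heavy barycentric expressions in the sidelengths $s_i$, which are themselves square roots of quadratics in the orbit parameter, so the raw products $X_{11,x}X_{11,y}$ and $X_{100,x}X_{100,y}$ are large irrational-trigonometric expressions; the crux is proving that every trace of the parameter cancels in the quotient. The reductions in the first two paragraphs are exactly what make this tractable, because asymptote-parallelism lets me bypass the full pencil of conic coefficients, and the stationarity of $X_9$ lets me read $k$ off a single point, so that only two scalar products must be simplified rather than an entire conic.
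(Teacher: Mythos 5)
Your proposal is correct, and its setup is genuinely leaner than the paper's. The paper builds $F$ and $J_{exc}$ explicitly as $c_1x+c_2y+c_3xy=0$ by solving for the coefficients from the three vertices (the long determinantal expressions \eqref{eqn:ci}), repeats this for the excenters \eqref{eqn:pi-prime}, and then forms $\lambda^2=|8c_1c_2/c_3^2|$ for each. You instead observe that once a rectangular, axis-parallel hyperbola is written as $(x-x_c)(y-y_c)=k$, its focal length is a universal multiple of $\sqrt{|k|}$, and that the single incidence at the stationary $X_9=(0,0)$ pins down $k$ as the product of the center's coordinates; this collapses the whole computation to $\lambda'/\lambda=\sqrt{|X_{100,x}X_{100,y}|/|X_{11,x}X_{11,y}|}$. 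This is consistent with the paper's own formula, since the center of $c_1x+c_2y+c_3xy=0$ is $(-c_2/c_3,-c_1/c_3)$ and hence $c_1c_2/c_3^2=x_cy_c$, but you reach it without ever assembling the $c_i$, needing only the Cartesian coordinates of $X_{11}$ and $X_{100}$. Your $X_1$-incidence cross-check and your remark that the isosceles configuration cannot be used to fix the candidate (both $k$'s vanish there, which is why the paper uses a right-triangle orbit instead) are both sound. The endgame is the same in both treatments: a CAS verification over the $P_1(t)=(a\cos t,b\sin t)$ parametrization that the candidate value $\sqrt{2/\rho}$ holds identically, so neither argument is fully synthetic; but your reduction makes that final symbolic step substantially smaller, trading the full conic-fitting for the (still nontrivial) conversion of the barycentrics of $X_{11}$ and $X_{100}$ to Cartesians.
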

 
\begin{equation}
 \frac{\lambda'}{\lambda}=
 \sqrt{2/\rho}>2
 \label{eqn:focal-ratio}
\end{equation}

\begin{proof}
Assume the EB is in the form of \eqref{eqn:billiard-f}. Let the 3-periodic be given by $P_i=(x_i,y_i),i=1,2,3$. $F$ passes through the $P_i$, $X_1$ and $X_9=(0,0)$. The asymptotes of $F$ are parallel to the EB axes, therefore this hyperbola is given by $c_1x+c_2y+c_3 x y = 0$ and $\lambda^2=|8c_1c_2/c_3^2|$, where: 
\begin{align}
c_1=& y_2y_3(x_2-x_3)x_1^2+(x_2^2y_3-x_3^2y_2-y_2^2y_3
+y_2y_3^2)x_1y_1 \nonumber\\
+&y_2y_3(x_2-x_3)y_1^2  
-(x_2y_3-x_3y_2)(x_2x_3+y_2y_3)y_1
 \nonumber \\
 c_2=&x_2x_3(y_2-y_3)x_1^2+(x_2x_3^2-x_2^2x_3-x_2y_3^2+x_3y_2^2)x_1y_1 \label{eqn:ci} \\
 +&(x_2y_3-x_3y_2)(x_2x_3+y_2y_3)x_1-x_2x_3(y_2-y_3)y_1^2 \nonumber\\
 c_3=& (x_2y_3-x_3y_2)x_1^2+
(x_3^2y_2-x_2^2y_3+  y_2^2y_3-y_2y_3^2)x_1 \nonumber\\
+&(x_3y_2-x_2y_3 )y_1^2 
+(x_2^2x_3-x_2x_3^2+x_2y_3^2-x_3y_2^2)y_1 \nonumber
\end{align}
 
Let $P_i'=(x_i',y_i'),i=1,2,3$ be the Excenters. They are given by
\begin{align} 
    P_1'=&\left({\frac {-{x_1}\,{s_1}+{x_2}\,{s_2}+{x_3}\,{s_3}}{{
s_2}+{s_3}-{s_1}}},{\frac {-{y_1}\,{s_1}+{y_2}\,{
s_2}+{y_3}\,{s_3}}{{s_2}+{s_3}-{s_1}}}\right) \nonumber\\
P_2'=& \left({\frac {{x_1}\,{s_1}-{x_2}\,{s_2}+{x_3}\,{s_3}}{{
s_3}+{s_1}-{s_2}}},{\frac {{y_1}\,{s_1}-{y_2}\,{
s_2}+{y_3}\,{s_3}}{{s_3}+{s_1}-{s_2}}}\right) \label{eqn:pi-prime} \\
P_3'=& \left({\frac {{x_1}\,{s_1}+{x_2}\,{s_2}-{x_3}\,{s_3}}{{
s_1}+{s_2}-{s_3}}},{\frac {{y_1}\,{s_1}+{y_2}\,{
s_2}-{y_3}\,{s_3}}{{s_1}+{s_2}-{s_3}}}\right) \nonumber
\end{align}
Here, $s_1=|P_2-P_3|$, $s_2=|P_1-P_3|$ and $s_3=|P_1-P_2|$.

Since $J_{exc}$ is also centered on the origin and has horizontal/vertical asymptotes, $J_{exc}$ is given by $c_1'x+c_2'y+c_3'x y=0$, and $(\lambda')^2=|8c_1'c_2'/(c_3')^2|$, where $c_i'$ are constructed as \eqref{eqn:ci} replacing $(x_i,y_i)$ with $(x_i',y_i')$.

Consider a right-triangle\footnote{We found this to best simplify the algebra.} 3-periodic, e.g., with $P_1(x^\perp,y^\perp)$ \cite[Proposition 5]{reznik2020-circumbilliard}:

\begin{equation*}
x^\perp=\frac{a^2 \sqrt{a^4+3 b^4-4 b^2 \delta }}{c^3},\;\;\;
y^\perp=\frac{b^2 \sqrt{-b^4-3 a^4+4 a^2 \delta }}{c^3}
\label{eqn:perp}
\end{equation*}

\noindent $P_2$ and $P_3$ are obtained explicitly \cite{garcia2019-incenter}. From these obtain $c_i$ using \eqref{eqn:ci}. Using \eqref{eqn:pi-prime} obtain $P_i'$ and the $c_i'$. Finally, obtain a symbolic expression for $\lambda'/\lambda$. After some manipulation and simplification with a Computer Algebra System (CAS), we obtain \eqref{eqn:focal-ratio} which we call a {\em candidate}.

Parametrize the 3-periodic family with $P_1(t)=(a\cos{t},b\sin{t})$ and using the sequence above arrive at an expression for $\lambda'/\lambda$ in terms of $t$. Subtract that from the right-triangle candidate. After some algebraic manipulation and CAS simplification verify the subtraction vanishes, i.e., $\lambda'/\lambda$ is independent of $t$.
\end{proof}

\begin{figure}
    \centering
    \includegraphics[width=\textwidth]{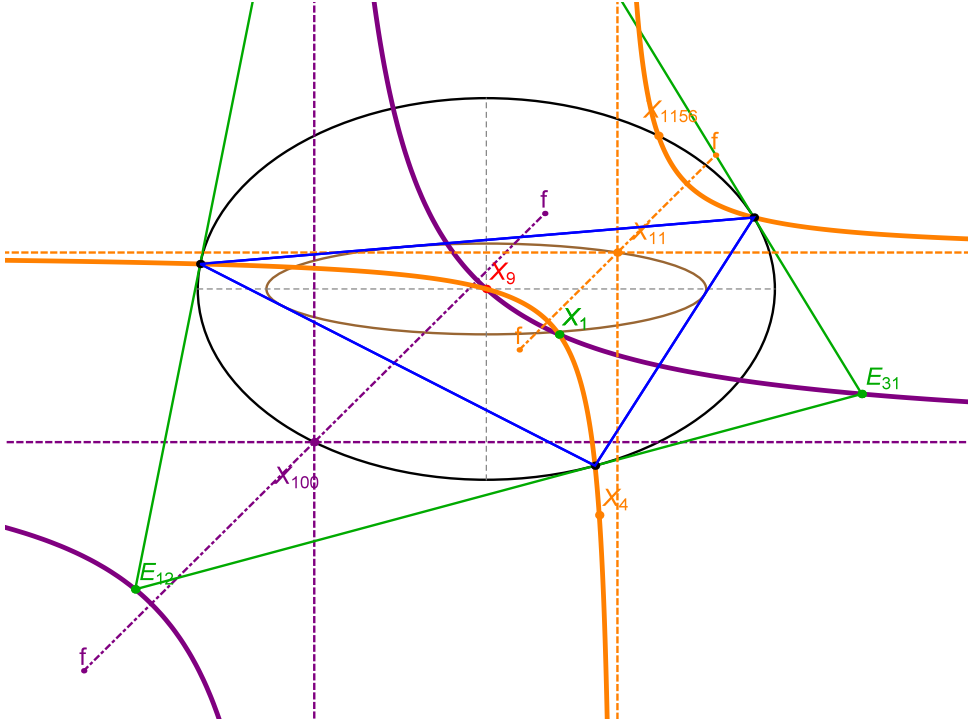}
    \caption{An $a/b=1.5$ EB is shown (black) as well as a sample 3-periodic (blue), the confocal Caustic (brown), and the Excentral Triangle (green). The 3-periodic's Feuerbach Circumhyperbola $F$ (orange) passes through its three vertices as well as $X_1$, $X_9$, and $X_4$. The Excentral's Jerabek Circumhyperbola $J_{exc}$ (purple) passes through the three Excenters, as well as $X_1$, $X_9$ and $X_{40}$ (not shown). Two invariants have been detected over the orbit family: (i) the asymptotes (dashed) of both $F$ and $J_{exc}$ stay parallel to the EB axes, (ii) the ratio of focal lengths is constant (focal axis appears dashed). $F$ intersects the Billiard at $X_{1156}$.}
    \label{fig:circumhyps}
\end{figure}

\subsection{Focal Length Extrema}

Let $P_1(t)=(a\cos{t},b\sin{t})$. While their ratio is constant, $\lambda$ and $\lambda'$ undergo three simultaneous maxima in $t\in(0,\pi/2)$, see Figure~\ref{fig:focal-ratio}. In fact, the following additional properties occur at configurations of maximal focal length (we omit the rather long algebraic proofs), see Figure~\ref{fig:zero-hyps}:

\begin{itemize}
    \item $F'$ is tangent to the Caustic at ${\pm}X_{11}$.
    \item $J'_{exc}$ is tangent to the EB at ${\pm}X_{100}$, i.e., at ${\mp}X_{1156}$ (see below).
\end{itemize}

\begin{remark}
Like $F$, $J'_{exc}$ intersects the EB at $X_{1156}$.
\end{remark}

This happens because $X_{1156}$ is the reflection of $X_{100}$ about $X_9$. If the latter is placed on the origin, then $X_{1156}=-X_{100}$, and $J'_{exc}$ passes through ${\pm}X_{100}$.

Let $F'$ and $J'_{exc}$ be copies of $F$ and $J_{exc}$ translated by $-X_{11}$ and $-X_{100}$ respectively, i.e., they become concentric with the EB (focal lengths are unchanged). Since their asymptotes are parallel to the EB axes and centered on the origin, their equations will be of the form:

\[
F': x\,y = k'_F,\;\;J'_{exc}: x\,y = k'_J
\]

\begin{remark}
$\lambda=2\sqrt{2k'_F}$, $\lambda'=2\sqrt{2k'_J}$, $\lambda'/\lambda=\sqrt{k'_J/k'_F}=\sqrt{2/\rho}$.
\end{remark} 

\begin{figure}
    \centering
    \includegraphics[width=\textwidth]{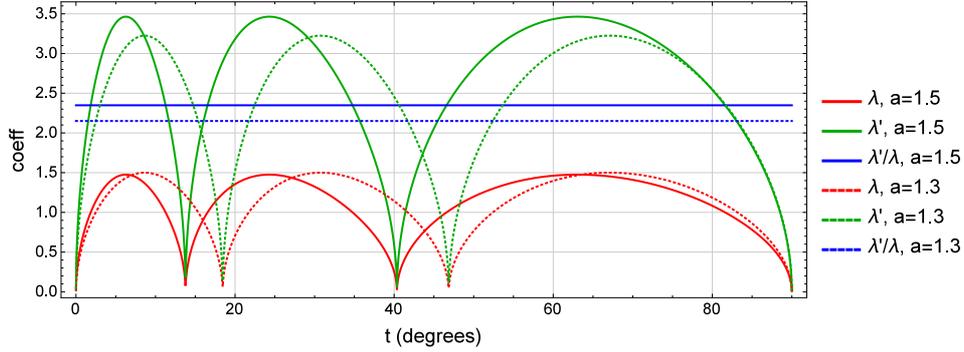}
    \caption{Focal lengths $\lambda,\lambda'$ of $F,J_{exc}$ vs the parameter $t$ in $P_1(t)=(a\cos{t},b\sin{t})$ are shown red and green. The solid (resp. dasheD) curves correspond to $a/b=1.5$ (resp. $a/b=1.3$). In the first quadrant there are 3 maxima. $\lambda'/\lambda$ (blue) remain constant for the whole interval.}
    \label{fig:focal-ratio}
\end{figure}

\begin{figure}
    \centering
    \includegraphics[width=\textwidth]{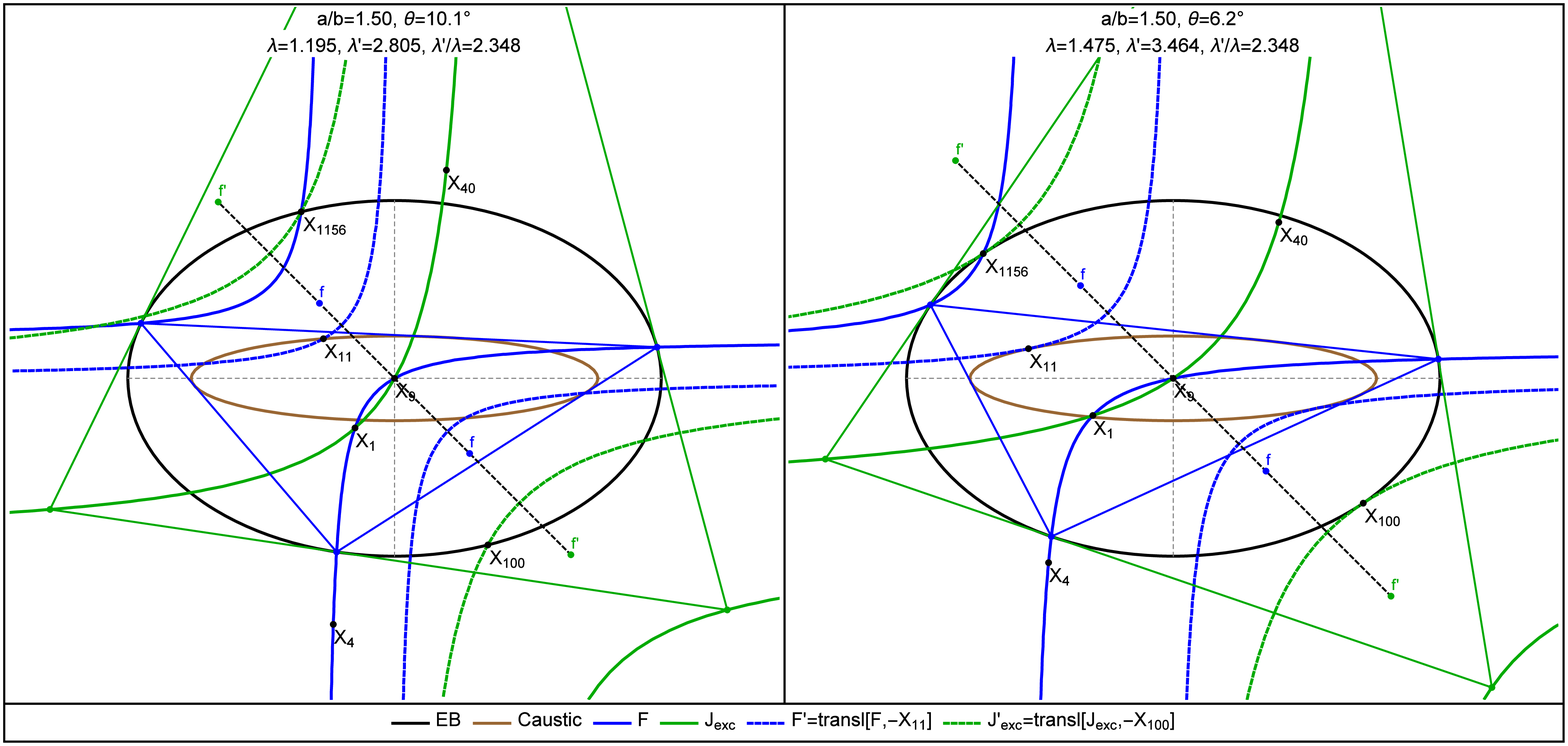}
    \caption{Two snapshots of $J$ and $F_{exc}$ drawn solid blue and solid green, respectively, for $a/b=1.5$. Also shown (dashed) are copies $F'$ and $J'_{exc}$ of both hyperbolas translated so they are dynamically concentric with the EB (translate $J$ by $-X_{11}$ and $F_{exc}$ by $-X_{100}$). Their focal lengths $\lambda,\lambda'$ are identical to the original ones; their focal axes are collinear and shown the dashed diagonal through the EB center. Notice that like $F$, $J'$ also intersects the EB at $X_{1156}$. \textbf{Left}: $t=10.1^\circ$, showing an intermediate value of ether focal length. \textbf{Right}: $t=6.2^\circ$, focal lengths are at a maximum. When this happens, the translated copy of $F$ (resp. $J_{exc}$) is tangent to the Caustic (resp. EB) at $X_{11}$ (resp. $X_{1156})$. \textbf{Video}: \cite[PL\#09,10]{reznik2020-playlist-circum}}
    \label{fig:zero-hyps}
\end{figure}

\


\section{Inconic Invariants}
\label{sec:inconic}
A triangle's {\em Inconic} touches its three sides while satisfying two other constraints, e.g., the location of its center. Similar to Circumconics, if the latter is interior to the 4 shaded regions in  Figure~\ref{fig:midlines} it is an ellipse, else it is a hyperbola. Lines drawn from each vertex to the Inconic tangency points concur at the perspector or Brianchon Point $B$ \cite{mw}.

Let the Inconic center $C$ be specified by Barycentrics\footnote{Barycentrics $g$ can be easily converted to Trilinears $f$ via: $f(s_1,s_2,s_3)=g(s_1,s_2,s_3)/s_1$ (cyclic) \cite{yiu2003}.} $g(s_1,s_2,s_3)$ (cyclic), then $B$ is given by $1/[g(s_2,s_3,s_1)+g(s_3,s_1,s_2)-g(s_1,s_2,s_3)]$ (cyclic) \cite{stothers2001-circumconics}. For example, consider the Inconic centered on $X_1$ (the Incircle), i.e., $g=s_2 s_3$ (cyclic). Then $B=1/(s_1 s_3+s_1 s_2-s_2 s_3)$. Dividing by the product ${s_1}{s_2}{s_3}$ obtain $B=1/(s_2 + s_3 - s_1)$ (cyclic), confirming that the perspector of the Incircle is the Gergonne Point $X_7$ \cite[Perspector]{mw}. The contact points are the vertices of the Cevian triangle through $B$.

Above we identified the confocal Caustic with the Mandart Inellipse $I_9$ \cite{mw} of the 3-periodic family, i.e., it is a stationary ellipse centered on $X_9$ and axis-aligned with the EB, Figure~\ref{fig:three-orbits-proof}. Its semi-axes $a_c,b_c$ are given by \cite{garcia2019-incenter}:

\begin{equation*}
a_c=\frac{a\left(\delta-{b}^{2}\right)}{a^2-b^2},\;\;\;
b_c=\frac{b\left({a}^{2}-\delta\right)}{a^2-b^2}\cdot
\end{equation*}

Similarly, the $X_9$-centered Inconic of the family of Excentral Triangles is the stationary EB, i.e., the EB is the Orthic Inconic \cite{mw} of the Excentrals.

\subsection{Excentral $X_3$-Centered Inconic}

No particular invariants have yet been found for any of the Inconics to 3-periodics with centers in $X_i$, $i=3,4,\ldots,12$. Let $I_3$ denote the $X_3$-centered inconic. Its Brianchon Point is $X_{69}$ and its foci aren't named centers \cite{moses2020-private-circumconic}.

\begin{remark}
$I_3$ is always an ellipse.
\end{remark}

To see this consider the Circumcenter of an acute, right, or obtuse triangle lies inside, on a vertex, or opposite to the interior of the Medial, respectively, i.e., within one of the 4 shaded regions in Figure~\ref{fig:midlines}.

Let $I'_3$ denote the $X_3$-centered Inconic of the Excentral Triangle. Its Brianchon Point is $X_{69}$ of the Excentral, i.e., $X_{2951}$ of the reference 3-periodic \cite{moses2020-private-circumconic}, Figure~\ref{fig:macbeath-excentral}(left). Let $\mu'_3,\mu_3$ denote $I_3'$ major and minor semi-axes. In \cite{reznik2020-poristic} we show:

\begin{proposition}
Let $T$ be a triangle and $T'$ its excentral. The Inconic $I_3'$ to $T'$ on its Circumcenter ($X_{40}$ of $T$) is a $90^\circ$-rotated copy of the $E_1$, the $X_1$-centered Circumconic, i.e.:

\begin{equation}
\mu_3'= \eta_1',\;\;\;\mu_3=\eta_1
\end{equation}
\end{proposition}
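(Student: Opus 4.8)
The plan is to prove the stronger, purely Euclidean statement (valid for every triangle $T$) by comparing the two conics through their \emph{primal} (radial) and \emph{dual} (support) quadratic forms, reducing the whole claim to Euler's relation $d^2=R(R-2r)$. Throughout I place the three interior-bisector directions of $T$, i.e.\ the directions of $X_1A$, $X_1B$, $X_1C$, at polar angles $\theta_A,\theta_B,\theta_C$ and set $\mathbf n_i=(\cos\theta_i,\sin\theta_i)$. I will use four standard facts about the excentral triangle $T'$, all available from the excerpt: $T'$ is acute, its circumradius is $2R$, its angle at $I_A$ is $(\pi-A)/2$, and its circumcenter $X_3(T')=X_{40}$ is the center of $I_3'$.

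First I would record two elementary dictionaries relating each conic to the directions $\mathbf n_i$. For $E_1$, centered at $X_1$ with primal matrix $Q_1$ (so $\mathbf v^{\!\top}Q_1\mathbf v=1$ on $E_1$), the vertex $A$ lies in direction $\mathbf n_A$ at distance $|X_1A|=r/\sin(A/2)$, whence $\mathbf n_A^{\!\top}Q_1\mathbf n_A=\sin^2(A/2)/r^2$ (and cyclically). For $I_3'$, the side $I_BI_C$ of $T'$ is the external bisector of $T$ at $A$, hence perpendicular to $X_1A$, so its unit normal is exactly $\mathbf n_A$; its distance from the center $X_{40}$ is the circumcenter-to-side distance $2R\cos\big((\pi-A)/2\big)=2R\sin(A/2)$. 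Writing $M=Q_{3'}^{-1}$ for the dual matrix of $I_3'$ (support distance $h(\mathbf n)=\sqrt{\mathbf n^{\!\top}M\mathbf n}$), this gives $\mathbf n_A^{\!\top}M\mathbf n_A=4R^2\sin^2(A/2)$ and cyclically.

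The key step is to combine these: for each of the three directions, $\mathbf n_i^{\!\top}M\mathbf n_i=4R^2r^2\,\mathbf n_i^{\!\top}Q_1\mathbf n_i$. Thus the symmetric $2\times2$ form $M-4R^2r^2Q_1$ vanishes on three pairwise-distinct directions; since a binary form $c_0+c_1\cos2\theta+c_2\sin2\theta$ with three zeros in $[0,\pi)$ is identically zero, I conclude $M=4R^2r^2Q_1$, i.e.\ the primal matrix of $I_3'$ equals $\tfrac1{4R^2r^2}Q_1^{-1}$: the same eigenvectors as $Q_1$ but reciprocal (scaled) eigenvalues. Hence $I_3'$ and $E_1$ share principal directions, with the semi-axis of $I_3'$ along $E_1$'s major axis equal to $2Rr/\eta_1'$ and the one along $E_1$'s minor axis equal to $2Rr/\eta_1$ — the axes are interchanged, which is precisely the asserted $90^\circ$ rotation.

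Finally I would invoke Theorem~\ref{thm:axis-ratio} ($\eta_1'=R+d$, $\eta_1=R-d$) together with $d^2=R(R-2r)$, which gives $R^2-d^2=2Rr$ and therefore $2Rr/\eta_1=(R^2-d^2)/(R-d)=R+d=\eta_1'$ and $2Rr/\eta_1'=R-d=\eta_1$. This yields exactly $\mu_3'=\eta_1'$ and $\mu_3=\eta_1$, completing the congruence. The main obstacle I anticipate is not the linear algebra but pinning down the dual-form data cleanly: verifying that the three support lines of $I_3'$ genuinely have normals $\mathbf n_A,\mathbf n_B,\mathbf n_C$ and positive distances $2R\sin(\cdot/2)$ (which relies on $T'$ being acute, so $X_{40}$ lies inside $T'$ and all three circumcenter-to-side distances are positive), and confirming that the three bisector directions are distinct modulo $\pi$ so that the vanishing-form argument applies.
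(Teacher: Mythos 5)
Your argument is correct, and it is a genuinely different — and more self-contained — route than the paper's, which does not prove this proposition in the text at all but defers it to \cite{reznik2020-poristic}; the companion results in this paper (Theorems~\ref{thm:axis-ratio} and \ref{thm:excIncX3}) are established by computing a candidate on a special configuration and verifying it over the family with a CAS. Your proof is synthetic and valid for an arbitrary triangle: the identity $\mathbf n_A^{\!\top}M\,\mathbf n_A=4R^2\sin^2(A/2)=4R^2r^2\,\mathbf n_A^{\!\top}Q_1\mathbf n_A$ in the three bisector directions, together with the observation that a form $c_0+c_1\cos2\theta+c_2\sin2\theta$ with three zeros modulo $\pi$ vanishes identically, pins down the dual matrix of $I_3'$ as $4R^2r^2$ times the primal matrix of $E_1$; this simultaneously proves the shared principal directions, the axis swap, and (via $\eta_1'\eta_1=(R+d)(R-d)=2Rr$, i.e.\ Theorem~\ref{thm:axis-ratio} plus Euler's relation) the equality of semi-axes. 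All the ingredients check out: $|X_1A|=r/\sin(A/2)$; the sideline $I_BI_C$ is the external bisector at $A$, hence normal to $\mathbf n_A$; the Bevan circle has radius $2R$ and the excentral angle at $I_A$ is $(\pi-A)/2$, giving the circumcenter-to-side distance $2R\sin(A/2)$ with the correct sign since $T'$ is acute; and the three bisector directions are distinct modulo $\pi$ because $\angle AX_1B=\pi/2+C/2\ne\pi$. What your approach buys is a conceptual explanation of \emph{why} the two conics are congruent up to a quarter turn — the inconic is literally the (scaled) polar dual of the circumellipse in the bisector frame, with the $90^\circ$ rotation arising from eigenvalue reciprocation and the scale factor $2Rr$ collapsing to an axis swap precisely because of Euler's relation — whereas the CAS route certifies the identity without exposing this mechanism. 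The only external input you take from the paper is the pair of formulas $\eta_1'=R+d$, $\eta_1=R-d$; since every triangle is a 3-periodic of its own circumbilliard, invoking Theorem~\ref{thm:axis-ratio} here is legitimate even though the proposition is stated for a general $T$.
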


\begin{figure}
    \centering    \includegraphics[width=\textwidth]{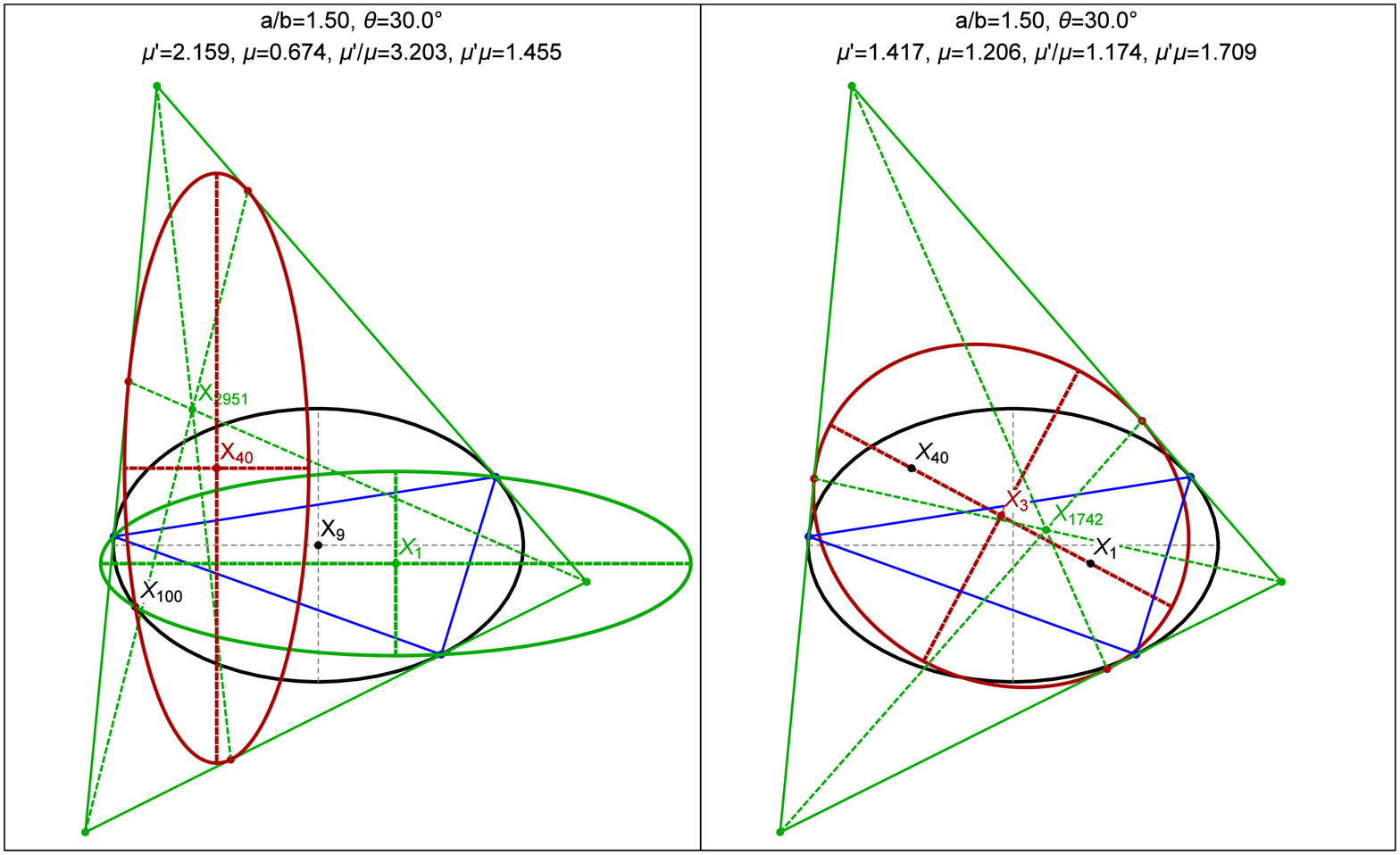}
    \caption{\textbf{Left}: The $X_{3}$-centered Inconic $I_3'$ (red) of the Excentral Triangle (green), whose center is $X_{40}$ in terms of the 3-periodic (blue). Its Brianchon Point is $X_{2951}$ \cite{moses2020-private-circumconic}. Its axes are aligned with the EB which it intersects at $X_{100}$, and its aspect ratio is invariant. The $X_1$-centered Circumconic $E_1$ (green) is a $90^\circ$-rotated copy of $I_3'$. \textbf{Right}: The MacBeath Inconic $I_5'$ of the Excentral (red) is centered on the latter's $X_5$ and has foci on $X_4$ and $X_3$. These correspond to  $X_3$, $X_1$, and $X_{40}$ of the reference triangle (blue). Its Brianchon Point is $X_{1742}$ \cite{moses2020-private-circumconic}. Though its axes are askew with respect to the EB, its aspect ratio is invariant over the 3-periodic family. \textbf{Video}: \cite[PL\#11]{reznik2020-playlist-circum}.}
    \label{fig:macbeath-excentral}
\end{figure}

\begin{remark}
As $E_1$, $I'_3$ is axis-aligned with the EB and contains $X_{100}$, Figure~\ref{fig:macbeath-excentral} (left).
\end{remark}

\begin{theorem}
Over 3-periodics, $\mu_3'$ and $\mu_3$ are variable though their ratio is invariant. These are given by:

\begin{align*}
\mu_3'=&R+d,\;\;\;\mu_3=R-d\\
\frac{\mu_3'}{\mu_3}=&
\frac{R+d}{R-d} = \frac{1+\sqrt{1-2\rho}}{\rho}-1>1
\end{align*}
\label{thm:excIncX3}
\end{theorem}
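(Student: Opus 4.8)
The plan is to leverage the Proposition immediately preceding this theorem, which asserts that $I_3'$ (the $X_3$-centered Inconic of the Excentral Triangle) is a $90^\circ$-rotated copy of the $X_1$-centered Circumellipse $E_1$, with the explicit identification $\mu_3'=\eta_1'$ and $\mu_3=\eta_1$. Granting that Proposition, the entire content of Theorem~\ref{thm:excIncX3} reduces to transporting the conclusions of Theorem~\ref{thm:axis-ratio} through this isometry. A rigid rotation preserves semi-axis lengths exactly, so each individual claim about $\mu_3',\mu_3$ is inherited verbatim from the corresponding claim about $\eta_1',\eta_1$.

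Concretely, I would proceed in three short steps. First, invoke Theorem~\ref{thm:axis-ratio} to record that $\eta_1'=R+d$ and $\eta_1=R-d$ with $d=|X_1X_3|=\sqrt{R(R-2r)}$, and that these lengths are individually variable over the family. Second, apply the preceding Proposition's equalities $\mu_3'=\eta_1'$ and $\mu_3=\eta_1$ to conclude immediately that $\mu_3'=R+d$ and $\mu_3=R-d$, and hence that $\mu_3',\mu_3$ are likewise variable (since $R$ and $d$ vary with the 3-periodic, even though $R$ and $r$ are not separately constant, only the combinations forming the ratio survive). Third, form the quotient:
\[
\frac{\mu_3'}{\mu_3}=\frac{\eta_1'}{\eta_1}=\frac{R+d}{R-d}=\frac{1+\sqrt{1-2\rho}}{\rho}-1,
\]
where the last equality is exactly the closed form already established in Theorem~\ref{thm:axis-ratio}; the strict inequality $>1$ follows because $d>0$ for any non-degenerate triangle (the Incenter and Circumcenter never coincide unless the triangle is equilateral, and by Euler's relation $d^2=R(R-2r)\ge 0$ with equality only in the equilateral case).

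The only genuine mathematical obstacle is entirely upstream: it is the preceding Proposition's claim that $I_3'$ is a $90^\circ$-rotated copy of $E_1$, which is proved in the reference \cite{reznik2020-poristic} and may be assumed here. Everything in this theorem is a transparent corollary of that rotational congruence combined with Theorem~\ref{thm:axis-ratio}; there is no new computation to perform. I would therefore write the proof as a one-line deduction, remarking only that a rotation is an isometry and hence preserves both semi-axis lengths and their ratio, so that the displayed formulas for $\mu_3',\mu_3$ and $\mu_3'/\mu_3$ are precisely those for $\eta_1',\eta_1$ and $\eta_1'/\eta_1$. If one wished to be self-contained rather than citing the Proposition, the alternative would be to compute the Excentral vertices via \eqref{eqn:pi-prime}, construct $I_3'$ as the inconic centered at the Excentral's $X_3$ (equivalently the reference $X_{40}$), extract its semi-axes in a CAS for a convenient isosceles or right-triangle configuration, and verify invariance of the ratio over the parametrized family $P_1(t)=(a\cos t,b\sin t)$ exactly as in the proof of Theorem~\ref{thm:focal-ratio}; but this is strictly redundant given the rotational-copy Proposition.
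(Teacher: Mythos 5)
Your proposal is correct, but it takes a different route from the paper. The paper proves Theorem~\ref{thm:excIncX3} the same way it proves Theorem~\ref{thm:axis-ratio}: derive a closed-form candidate from an isosceles 3-periodic and then verify with a CAS that the expression holds for an arbitrary configuration (pointing to \cite{reznik2020-poristic} for a related argument). You instead treat the theorem as a pure corollary of the preceding Proposition ($\mu_3'=\eta_1'$, $\mu_3=\eta_1$, since $I_3'$ is a $90^\circ$-rotated, hence isometric, copy of $E_1$) combined with the formulas already established in Theorem~\ref{thm:axis-ratio}. Your deduction is logically sound: an isometry preserves semi-axis lengths, so every displayed identity transports verbatim, and your algebra confirming $\tfrac{R+d}{R-d}=\tfrac{1+\sqrt{1-2\rho}}{\rho}-1$ and the strict inequality (via $d=R\sqrt{1-2\rho}>0$ for the never-equilateral 3-periodics) is fine. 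What your approach buys is economy and transparency — it makes explicit that the theorem contains no new content beyond the rotational-congruence Proposition, and it avoids a second CAS verification. What it costs is that the entire burden shifts upstream to that Proposition, which this paper states without proof (deferring to \cite{reznik2020-poristic}), and to Theorem~\ref{thm:axis-ratio}, whose own proof is again the CAS candidate-verification; so your argument is shorter but not more self-contained. Your closing remark that a direct computation via \eqref{eqn:pi-prime} and the parametrization $P_1(t)=(a\cos t, b\sin t)$ would serve as a fallback is essentially a description of what the paper actually does.
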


\noindent As above, $d=\sqrt{R(R-2r)}$, and $\rho=r/R$.

\begin{proof}
The above was initially a candidate an isosceles 3-periodic and then verified it held any 3-periodic configuration with a CAS. A related proof appears in \cite{reznik2020-poristic}.
\end{proof}

\subsection{Excentral  X$_5$-Centered (MacBeath) Inconic}

Above the locus of the Excenters is identified with the Excentral MacBeath Circumconic $E_6' $. The MacBeath {\em Inconic} $I_5$ of a triangle is centered on $X_5$ and has foci on $X_4$ and $X_3$. Its Brianchon Point is $X_{264}$, and it can be both an ellipse or a hyperbola \cite[MacBeath Inconic]{mw}. No invariants have yet been found for $I_5$.

Consider $I'_5$, the MacBeath Inconic of the Excentral Triangle, Figure~\ref{fig:macbeath-excentral}(right). The center and foci of $I'_5$ with respect to the reference triangle are $X_3$, $X_1$, and $X_{40}$, respectively, and its Brianchon is $X_{1742}$ \cite{moses2020-private-circumconic}. Unlike $I'_3$, the axes of $I'_5$ are askew with respect to the EB.

\begin{remark}
$I'_5$ is always an ellipse.
\end{remark}

This is due to the fact that the Excentral is acute as is its homothetic Medial. Since $X_5$ is the latter's Circumcenter, it must lie inside it.

\noindent Let $\mu'_5,\mu_5$ denote $I_5'$ major and minor semi-axes.

\begin{theorem}
Over the 3-periodic family $\mu_5'$ and $\mu_5$ are variable and their ratio is invariant. These are given by:

\begin{align*}
\mu_5'=&R,\;\;\;\mu_5=\sqrt{R^2-d^2}\\
 \frac{\mu_5'}{\mu_5}=&\frac{R}{\sqrt{R^2-d^2}}=\frac{1}{\sqrt{2 \rho}}\\
\end{align*}
\label{thm:excIncX5}
\end{theorem}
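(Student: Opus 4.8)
The plan is to bypass the CAS entirely and instead exploit the classical description of the MacBeath inconic as the inconic whose foci are the circumcenter $O'$ and orthocenter $H'$ of its reference triangle (here the Excentral triangle $T'$) and whose center is the nine-point center $N'$ of $T'$. First I would record the dictionary between the centers of $T'$ and those of the 3-periodic $T$, already set up in the paper: the circumcenter $O'=X_3(T')$ is the Bevan point $X_{40}$, the orthocenter $H'=X_4(T')$ is the incenter $X_1$, and the nine-point center $N'=X_5(T')$ is the circumcenter $X_3$. Two standard facts about $T'$ do the heavy lifting. Since the 3-periodic is the Orthic Triangle of $T'$, the circumcircle of $T$ coincides with the nine-point circle of $T'$, whence $R'=2R$. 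Moreover the angles of $T'$ are $A'=\pi/2-A/2$, and cyclically, so that $\cos A'=\sin(A/2)$, etc.

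For the minor semi-axis I would invoke the classical property that the product of the distances from the two foci of an ellipse to any tangent line equals the square of the semi-minor axis. Applying this to a side of $T'$, and using that the distance from $O'$ to that side is $R'\cos A'$ while the distance from $H'$ to it is $2R'\cos B'\cos C'$, yields $\mu_5^2 = 2(R')^2\cos A'\cos B'\cos C' = 8R^2\sin(A/2)\sin(B/2)\sin(C/2)$. The half-angle identity $\sin(A/2)\sin(B/2)\sin(C/2)=r/(4R)$ then collapses this to $\mu_5^2 = 2Rr = R^2-d^2$, using $d^2=R(R-2r)$.

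For the major semi-axis I would use the focal relation $\mu_5'^2 = \mu_5^2 + c^2$, where $c$ is the center-to-focus distance. Since $X_3$ is the midpoint of the foci $X_1$ and $X_{40}$ (the Bevan point is the reflection of the incenter in the circumcenter), one has $c = |X_1X_3| = d$. Hence $\mu_5'^2 = (R^2-d^2)+d^2 = R^2$, so $\mu_5'=R$. The ratio is therefore $\mu_5'/\mu_5 = R/\sqrt{2Rr} = 1/\sqrt{2\rho}$, which depends only on $\rho=r/R$; by the conservation of the inradius-to-circumradius ratio over the 3-periodic family (noted in the Introduction), this ratio is invariant even though $R$, and hence $\mu_5'$ and $\mu_5$ individually, vary.

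The main obstacle is largely bookkeeping: pinning down the center/focus dictionary and its orientation (which focus is which reference-triangle center), and confirming the excentral angle and circumradius formulas, after which the half-angle identities make everything telescope cleanly. One should also check that $I_5'$ is a genuine ellipse, so that the product-of-focal-distances argument is valid — but this is exactly the preceding Remark, since the Excentral triangle is acute.
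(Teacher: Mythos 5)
Your proposal is correct, and it takes a genuinely different route from the paper's. The paper's proof is purely computational: the formulas are obtained as a \emph{candidate} from an isosceles 3-periodic and then verified over the whole family with a CAS (with a related symbolic derivation deferred to the poristic-family paper). You instead give a synthetic derivation valid for \emph{any} triangle: using the focal characterization of the MacBeath inconic (foci at $X_3$ and $X_4$ of its reference triangle, center at $X_5$), the excentral dictionary already set up in Section~\ref{sec:circumhyperbolae} ($X_4(T')=X_1$, $X_5(T')=X_3$, $X_3(T')=X_{40}$), the relations $R'=2R$ and $A'=\pi/2-A/2$ for the excentral triangle, the focal-pedal property $\mathrm{dist}(F_1,\ell)\cdot\mathrm{dist}(F_2,\ell)=b^2$ for tangent lines $\ell$, and the half-angle identity $\sin(A/2)\sin(B/2)\sin(C/2)=r/(4R)$. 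All the ingredients check out: $\mu_5^2=8R^2\sin(A/2)\sin(B/2)\sin(C/2)=2Rr=R^2-d^2$, the center-to-focus distance is $|X_1X_3|=d$ because $X_{40}$ is the reflection of $X_1$ in $X_3$, hence $\mu_5'^2=2Rr+R(R-2r)=R^2$, and the ellipticity needed for the pedal argument is guaranteed by the acuteness of the excentral triangle, as you note. What your approach buys is significant: it shows $\mu_5$ and $\mu_5'$ depend only on $r$ and $R$ for an arbitrary triangle, so the invariance of the ratio over 3-periodics is an immediate corollary of the known conservation of $\rho=r/R$ rather than a fact requiring family-wide symbolic verification; it also answers, at least for this inconic, the paper's own closing question about alternative non-CAS proofs. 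The paper's method buys only uniformity with its other computations.
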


The above was derived for an isosceles 3-periodic and shown to work for any $a,b$. A related proof is given in \cite{reznik2020-poristic}.

\section{Conclusion}
\label{sec:conclusion}
The various quantities and invariants described above are summarized on Table~\ref{tab:invariants}.

\begin{table}[H]
\begin{tabular}{|c|l|l|l|}
\hline
Length & Meaning & Value & Invariant \\
\hline
$\eta_1'$ & Major semiaxis of $E_1$ & $R+d$ & -- \\
$\eta_1$ & Minor semiaxis of $E_1$ & $R-d$ & -- \\
$\eta_1'/\eta_1$ & $E_1$ aspect ratio & 
$(1-\rho+\sqrt{1-2\rho})/\rho$ & yes \\
\hline
$\lambda'$ & $J_{exc}$ focal length & -- & -- \\
$\lambda$ & $F$ focal length & -- & -- \\
$\lambda'/\lambda$ & Ratio of focal lengths & $\sqrt{2/\rho}$ & yes \\
\hline
$\mu_3'$ & Major semiaxis of $I_3'$ & $=\eta_1'$ & -- \\
$\mu_3$ & Minor semiaxis of $I_3'$ & $=\eta_1$ & -- \\
$\mu_3'/\mu_3$ & $I_3'$ aspect ratio &  $=\eta_1'/\eta_1$ & yes \\
\hline
$\mu_5'$ & Major semiaxis of $I_5'$ & $R$ & -- \\
$\mu_5$ & Minor semiaxis of $I_5'$ & $\sqrt{R^2-d^2}$ & -- \\
$\mu_5'/\mu_5$ & $I_5'$ aspect ratio & ${1}/{\sqrt{2\rho}}$ &  yes \\
\hline
\end{tabular}
\caption{Summary of Circumconic and Inconic Quantities. $\rho=r/R$ and $d=\sqrt{R(R-2r)}$.}
\label{tab:invariants}
\end{table}

Videos mentioned above have been placed on a \href{https://bit.ly/379mk1I}{playlist} \cite{reznik2020-playlist-circum}. Table~\ref{tab:playlist} contains quick-reference links to all videos mentioned, with column ``PL\#'' providing video number within the playlist.

Additionally to Conjecture~\ref{conj:moses} we submit the following questions to the reader:

\begin{itemize}
  \item Can expressions be derived for $\lambda'$ and $\lambda$ (we only provide one for their ratio)?
  \item Can alternate proofs be found for Theorems~\ref{thm:axis-ratio} and \ref{thm:focal-ratio} with tools of algebraic and/or projective geometry?
  \item Are there other notable circumconic pairs which exhibit interesting invariants?
  \item Can any of the invariants cited above be generalized to $N$-periodics?
\item Are there other Inconic invariants over 3-periodics and/or their derived triangles? 
    \item Are there interesting properties for the loci of the {\em foci} of Feuerbach and/or Jerabek Circumhyperbolas?
    \item The Yff Circumparabola whose focus is on $X_{190}$ is shown in \cite[PL\#12]{reznik2020-playlist-circum} over 3-periodics. Are there interesting invariants?
    \item Does any Triangle Circumcubic display interesting invariants over 3-periodics? We found none for the Thomson Cubic shown in \cite[PL\#13]{reznik2020-playlist-circum}. How about the Darboux, Neuberg, Lucas, and myriad others catalogued in \cite{gibert2020-cubics}.
\end{itemize}

\begin{table}
\begin{tabular}{c|l|l}
\href{https://bit.ly/2NOOIOX}{PL\#} & Title & Section\\
\hline

\href{https://youtu.be/tMrBqfRBYik}{01} &
{Mittenpunkt stationary at EB center} & \ref{sec:intro} \\

\href{https://youtu.be/yEu2aPiJwQo}{07} & \makecell[lt]{Invariant Aspect Ratio of \\Circumbilliard of Poristic Family} &
\ref{sec:intro} \\

\href{https://youtu.be/P_Io7HsWGnQ}{08} &
{The $X_1$- and $X_2$-centered Circumellipses} &
\ref{sec:circumellipses} \\

\href{https://youtu.be/Pz4tUijYZCA}{09} &
\makecell[lt]{Orbit Feuerbach and \\ Excentral Jerabek Circumhyperbolas} &
\ref{sec:circumhyperbolae} \\


\href{https://youtu.be/ewioM6-nCpY}{10} &
Invariant Focal Length Ratio for $F$ and $J_{exc}$ &
\ref{sec:circumhyperbolae} \\

\href{https://youtu.be/CHbrZvx1I8w}{11} &
\makecell[lt]{
Excentral MacBeath and $X_3$-Centered\\Inconics: Invariant Aspect Ratio
} &
\ref{sec:inconic} \\

\href{https://youtu.be/Sm9g5lqhZbk}{12} & The Yff Circumparabola of 3-Periodics &
\ref{sec:conclusion} \\

\href{https://youtu.be/s-h72iorZKw}{13} & The Thomson Cubic of 3-Periodics &
\ref{sec:conclusion} \\


\end{tabular}
\caption{Videos mentioned in the paper. Column ``PL\#'' indicates the entry within the playlist \cite{reznik2020-playlist-circum}}
\label{tab:playlist}
\end{table}

\section*{Acknowledgments}
We would like to thank Peter Moses and Clark Kimberling, for their prompt help with dozens of questions. We would like to thank Boris Odehnal for his help with some proofs. A warm thanks goes out to Profs. Jair Koiller and Daniel Jaud who provided critical editorial help.

The second author is fellow of CNPq and coordinator of Project PRONEX/ CNPq/ FAPEG 2017 10 26 7000 508.

\bibliographystyle{spmpsci}
\bibliography{elliptic_billiards_v3,authors_rgk_v1} 

\appendix
\section{Computing a  Circumconic}
\label{app:circum-linear}
Let a Circumconic have center $M=(x_m,y_m)$ Equation~\eqref{eqn:e0} is subject to the following 5 constraints\footnote{If $M$ is set to $X_9$ one obtains the Circumbilliard.}: it must be satisfied for vertices $P_1,P_2,P_3$, and its gradient must vanish at $M$:

\begin{align*}
f(P_i)=&\;0,\;\;\;i=1,2,3\\
\frac{dg}{dx}(x_m,y_m)=&\;c_1+c_3 y_m+2c_4 x_m=0\\
\frac{dg}{dy}(x_m,y_m)=\;&c_2+c_3 x_m+2c_5 y_m=0
\end{align*}

Written as a linear system:

$$
\left[
\begin{array}{ccccc}
x_1&y_1&x_1 y_1&x_1^2&y_1^2\\
x_2&y_2&x_2 y_2&x_2^2&y_2^2\\
x_3&y_3&x_3 y_3&x_3^2&y_3^2\\
1&0&y_m&2\,x_m&0\\
0&1&x_m&0&2\,y_m
\end{array}
\right] .
\left[\begin{array}{c}c_1\\c_2\\c_3\\c_4\\c_5\end{array}\right] =
\left[\begin{array}{c}-1\\-1\\-1\\0\\0\end{array}\right]
$$

Given sidelenghts $s_1,s_2,s_3$, the coordinates of $X_9=(x_m,y_m)$ can be obtained by converting its Trilinears $\left(s_2 + s_3 - s_1 :: ...\right)$ to Cartesians \cite{etc}. 

Principal axes' directions are given by the eigenvectors of the Hessian matrix $H$ (the jacobian of the gradient), whose entries only depend on $c_3$, $c_4$, and $c_5$:

\begin{equation}
H = J(\nabla{g})=\left[\begin{array}{cc}2\,c_4&c_3\\c_3&2\,c_5\end{array}\right]
\label{eqn:hessian}
\end{equation}

The ratio of semiaxes' lengths is given by the square root of the ratio of $H$'s eigenvalues:

\begin{equation}
a/b=\sqrt{\lambda_2/\lambda_1}
\label{eqn:ratiolambda}
\end{equation}

Let $U=(x_u,y_u)$ be an eigenvector of $H$. The length of the semiaxis along $u$ is given by the distance $t$ which satisfies:

$$
g(M + t\,U) = 0
$$

This yields a two-parameter quadratic $d_0 + d_2 t^2$, where:

$$
\begin{array}{cll}
d_0 & = & 1 + c_1 x_m + c_4 x_m^2 + c_2 y_m + c_3 x_m y_m + c_5 y_m^2 \\ 
d_2 & = & c_4 x_u^2 + c_3 x_u y_u + c_5 y_u^2
\end{array}
$$

The length of the semi-axis associated with $U$ is then $t=\sqrt{-d_0/d_2}$. The other axis can be computed via \eqref{eqn:ratiolambda}.

The eigenvectors (axes of the conic) of $H$ are given by the  zeros of the quadratic form
\begin{align*}
   q(x,y)= c_3(y^2-x^2)+2(c_2-c_5)xy
\end{align*}

\section{Circumellipses of Elementary Triangle}
\label{app:circum-x1x2x9}
Let a triangle $T$ have vertices $P_1=(0,0)$, $P_2=(1,0)$ and $P_3=(u,v)$ and sidelengths $s_1,s_2,s_3$. Using the linear system in Appendix~\ref{app:circum-linear}, one can obtain implicit equations for the circumellipses $E_9,E_1,E_2$ centered on $T$'s Mittenpunkt $X_9$, Incenter $X_1$, and Barycenter $X_2$, respectively:

\begin{align*}
E_9(x,y)=&  v^2 x^2-v(s_1 -s_2-1+2u)xy+((s_1-s_2-1)u+u^2+s_2 )y^2\\
-&v^2x+((s_1-s_2-1)u+u^2+s_2)y^2+v(u-s_2)y=0\\
E_1(x,y)=&  \left( L-2 \right) {v}^{2}{x}^{2} + \left( L-2\,{ s_2}-2\,u
 \right)   \left( L-2 \right) v\, xy \\
 +& \left( 
-{L}^{2}u+ \left( 2\,u+1 \right) L{ s_2}+ \left( {u}^{2}+2\,u
 \right) L-2\,{s_2}^{2}-4\,u{ s_2}-2\,{u}^{2} \right) {y}^{2}\\
 -& \left( L-2 \right) {v}^{2} x
 - v\left( L{ s_2}-uL-2\,{s_2}^{2}+2\,u \right) y 
\\
E_2(x,y)=&v^2x^2+v(1-2u)xy+(u^2-u+1)y^2-v^2x+v(u-1)y=0\\
s_1=& \sqrt{(u-1)^2+v^2},\;\;\; s_2=\sqrt{u^2+v^2}, \;\;\; L=s_1+s_2+1
\end{align*}

Consider the quadratic forms
\begin{align*}
    q_9(x,y)=&   v\left(  { s_1}\, -{ s_2}\, +2 u  -1 \right) {x}^{2}+ 2\left(  (s_2-s_1)u
 - { s_2} - \,{u}^{2}+ u+  {v}^{2}
 \right) xy\\
 +& v\left( 1-2u - { s_1}\, +{ s_2}\,    \right) {y}^{2}
 \\
 q_1(x,y)=& -v \left( L-2 \right)  \left( L-2\,{ s_2}-2\,u \right) {x}^{2}+v  \left( L-2 \right)  \left( L-2\,{ s_2}-2\,u\right)y^2 \\
 +&
2 (  L^2u-(2u+1)Ls_2+(v^2-u^2-2u)L+ 4s_2u+4u^2 ) xy\\
q_2(x,y)=&v(2u+1)x^2+2(-u^2+v^2+u-1)xy+v(1-2u)y^2
\end{align*}

The axes of $E_9$ (resp. $E_1$) are defined by the zeros of $q_9$ (resp. $q_1$). Using the above equations it is straightforward to show that the axes of $E_1$ and $E_9$ are parallel.

The axes of $E_2$ and $E_9$ are parallel if and only if $(u-1)^2+v^2=1$ or $u^2+v^2=1$; this means that the triangle is isosceles.

The implicit equations of the circumhyperbolas $F $ passing through the vertices of the orbit centered on $X_{11}$ and   $J_{exc} $ passing through the vertices of the excentral triangle and centered on $X_{100}$ are:
{\small 
\begin{align*}
F(x,y)=& v^3(2 u  -1) (x^2-y^2)+  v^3(1-2u)x\\
+&[( s_2^3+(u-1) s_2^2-u s_2) s_1+(2u-1)s_2^2-us_1^2s_2 -u^4-4 u^2 v^2+v^4+2 uv^2] x y\\
 +&[ s_1^2 u^2  s_2+(-u  s_2^3-u (u-1)  s_2^2+ s_2 u^2)  s_1+u  s_2^4-v^2  s_2^2-u^3 (2 u-1)]y=0\\
 J_{exc}(x,y)=&  4v^3\left( 2\,u -1\, \right) ({x}^{2}-y^2)\\
 +& [ \left( 4\,s_2^{3}+ 4\left(  u-1
 \right)     s_2 ^{2}-4\,u{   s_2} \right) {   s_1}  -4\,u 
   s_1 ^{2}{   s_2} -4\,    s_2 ^{2} \\
   -&4\,{u}^{4}-16\,{
u}^{2}{v}^{2}+4\,{v}^{4}+8\,{u}^{3}+16\,u{v}^{2}
 ] xy\\
 +&[ (2 (- s_1  s_2^3+( (1- s_1) u-v^2+ s_1)  s_2^2+u  s_1 ( s_1+1)  s_2+u (u-2) s_1^2)) v]x\\
 +&[ (4-4 u) s_2  ((u^2-( \frac{1}{2} s_1 +1) u+ \frac{1}{2} s_1 +\frac{1}{2}) s_2 + \frac{1}{2} u s_1  (s_1 +1)- \frac{1}{2} s_2 ^2 (s_2 +s_1 ))]y\\
   -& u  s_1^2 v  s_2+(v  s_2^3+(-1+u) v  s_2^2-u v  s_2)  s_1+v  s_2^4-(2 u^2-2 u+1) v  s_2^2=0
\end{align*}
}
 Using the above equations it is straightforward to show that the axes of $E_9$ and asymptotes  of $F $ and $J_{exc} $ are parallel.

\section{Circumellipses with Parallel Axes}
\label{app:ce_parallel}
Consider a triangle with vertices $A=(u,v)$, $B=(-1,0)$ and $C=(1,0).$ Let $s_2=|A-B|$ and $s_3=|B-C|$.
The equation of $F_{med}$ is given by:
{\small  
\begin{equation}    
\aligned 
F_{med}(x,y)=&4 u v^3(x^2- y^2) (2( u-1)  s_3  s_2^2-( 2(u-1)  s_3^2-2( u^2+ v^2-1) s_3)s_2\\ 
&2(u^2-1)^2+2v^2(4u^2-v^2)) xy\\
&+(-((u^2+v^2-1)  s_3+ s_3^2 (u-1)) v  s_2+ s_2^2 (u+1) v  s_3\\
-& v(v^4+(u^2-1)^2 ) x\\
+&(2 u^2+2 v^2-2)   u v^2\; y=0
\endaligned
\end{equation}
}
A one parameter family of circumellipses passing through $A$, $B$, $C$ and $X_{100}$ is given by:

\begin{equation}\aligned  
 E_b(x,y)
 =&  u v^3(bxy  -4  x^2)\\
 + &((b (u^2+v^2-1)  s_3+b (u-1)  s_3^2)  s_2-b (u+1)  s_2^2  s_3\\
 -&v^2(b(4u^2-v^2)+4uv)-b(u^2-1)^2) y^2\\ 
 -&v ((b (u^2+v^2-1)  s_2-b (u+1)  s_2^2)  s_3+b (u-1)  s_3^2  s_2\\
-&b(u^2+v^2-1) (  u^2-  v^2+  u v-1)) y+ 4 u v^3=0
 \endaligned
\end{equation}

It is
straightforward to verify that the family $E_b$ is
axis aligned (independent of b). Denoting the center of $E_b$ by $(x_c,y_c)$ it follows, using CAS, that $F_{med}(x_c,y_c)=0.$

The reciprocal
follows similarly.

\section{Table of Symbols}
\label{app:symbols}
Tables~\ref{tab:kimberling} and \ref{tab:symbols} lists most Triangle Centers and symbols mentioned in the paper.

\begin{table}[H]
\scriptsize
\begin{tabular}{|c|l|l|}
\hline
Center & Meaning & Note\\
\hline
$X_1$ & Incenter & Locus is Ellipse \\
$X_2$ & Barycenter & Perspector of Steiner Circum/Inellipses \\
$X_3$ & Circumcenter & Locus is Ellipse, Perspector of $M$ \\
$X_4$ & Orthocenter & \makecell[tl]{Exterior to EB\\ when 3-periodic is obtuse} \\
$X_5$ & Center of the 9-Point Circle & \\
$X_6$ & Symmedian Point & Locus is Quartic \cite{garcia2020-ellipses} \\
$X_7$ & Gergonne Point & Perspector of Incircle \\
$X_8$ & Nagel Point & Perspector of $I_9$, $X_1$ of ACT Incircle \\
$X_9$ & Mittenpunkt & Center of (Circum)billiard \\
$X_{10}$ & Spieker Point & Incenter of Medial \\
$X_{11}$ & Feuerbach Point & on confocal Caustic \\
$X_{40}$ & Bevan Point & $X_3$ of Excentral \\
$X_{69}$ & $X_6$ of the ACT & Perspector of $I_3$ \\
$X_{100}$ & Anticomplement of $X_{11}$ & On Circumcircle and EB, $J_{exc}$ center \\
$X_{125}$ & Center of Jerabek Hyperbola $J$ & \\
$X_{142}$ & $X_9$ of Medial & Midpoint of $X_9{X_{7}}$, lies on $L(2,7)$ \\ 
$X_{144}$ & Anticomplement of $X_7$ &\makecell[tl]{Perspector of ACT\\and its Intouch Triangle} \\
$X_{168}$ & $X_9$ of the Excentral Triangle &  Non-elliptic Locus\\
$X_{190}$ & Focus of the Yff Parabola & Intersection of $E_2$ and the EB \\
$X_{264}$ & Isotomic Conjugate of $X_3$ & Perspector of $I_5$ \\
$X_{649}$ & Cross-difference of $X_1,X_2$ & Perspector of $J_{exc}$ \\ 
$X_{664}$ &  Trilinear Pole of $L( 2,7)$ & Intersection of $E_1$ and $E_2$ \cite{moses2020-private-circumconic} \\
$X_{650}$ & Cross-difference of $X_1,X_3$ & Perspector of $F$ \\
$X_{1156}$ & \makecell[tl]{Isogonal Conjugate of\\Schröder Point $X_{1155}$} &  intesection of $F$ with EB \\
$X_{1742}$ & Mimosa Transform of $X_{212}$ & Perspector of $I_5'$ \\
$X_{2951}$ & Excentral-Isogonal Conjugate of $X_{57}$ & Perspector of $I_3'$ \\
$X_{3035}$ & Complement of $X_{11}$ & Center of $F_{med}$ \\
$X_{3659}$ & $X_{11}$ of Excentral Triangle & Center of $F_{exc}$ \\
\hline
$L(2,7)$ & ACT-Medial Mittenpunkt Axis & Line $\mathcal{L}_{663}$  \cite{etc_central_lines} \\
$L(1,3)$ & Isogonal Conjugate of $F$ and $J_{exc}$ &  Line $\mathcal{L}_{650}$ \cite{etc_central_lines}  \\
\hline
\end{tabular}
\caption{Kimberling Centers and Central Lines mentioned in the paper.}
\label{tab:kimberling}
\end{table}

\begin{table}
\scriptsize
\begin{tabular}{|c|l|l|}
\hline
Symbol & Meaning & Note\\
\hline
$a,b$ & EB semi-axes & $a>b>0$\\ 
$P_i,s_i$ & Vertices and sidelengths of 3-periodic & invariant $\sum{s_i}$ \\
$P_i'$ & Vertices of the Excentral Triangle & \\
$a_c,b_c$ & Semi-axes of confocal Caustic & \\ 
$r,R,\rho$ & Inradius, Circumradius, $r/R$ & $\rho$ is invariant \\
$\delta$ & Oft-used constant & $\sqrt{a^4-a^2 b^2+b^4}$ \\
$d$ & Distance $|X_1{X_3}|$ & $\sqrt{R(R-2r)}$\\
$P^\perp$ & Obtuse 3-periodic limits on EB & \\
\hline
$F,J$ & Feuerbach, Jerabek Hyperbola & Centers $X_{11},X_{125}$, Perspectors $X_{650},X_{647}$ \\
$F_{exc}$ & $F$ of Excentral Triangle & Center $X_{3659}$ \cite{moses2020-private-circumconic} \\
$J_{exc}$ & $J$ of Excentral Triangle & Center $X_{100}$, Perspector $X_{649}$ \\
$F',J_{exc}'$ & $F,J_{exc}$ translated by $-X_{11},-X_{100}$ & Origin-centered \\
$F_{med}$ & $F$ of Medial & Center $X_{3035}$ 
\cite{moses2020-private-circumconic} \\
\hline
$E_i$ & Circumellipse centered on $X_i$ & \makecell[tl]{Axes parallel to $E_9$ if $X_i$ on $F_{med}$} \\
$E_6'$ & Excentral MacBeath Circumellipse & Center $X_9$, Perspector $X_{40}$ \\
\hline
$I_3,I_5$ & Inellipses Centered on $X_3$,$X_5$ & Perspectors $X_{69},X_{264}$ \\
$I_9$ & Mandart Inellipse & Perspector $X_8$ \\
$I'_3$ & Excentral $I_3$ & Center $X_{40}$, Perspector $X_{2951}$ \cite{moses2020-private-circumconic} \\
$I'_5$ & Excentral $I_5$ & Center $X_{3}$, Perspector $X_{1742}$ \cite{moses2020-private-circumconic}\\
\hline
$\eta'_i,\eta_i$ & Major and minor semiaxis of $E_i$ & Invariant ratio if $X_i$ on $F_{med}$\\
$\mu_i',\mu_i$ & Major and minor semiaxis of $I_i'$ & Invariant ratio for $i=3,5$ \\
$\lambda',\lambda$ & Focal lengths of $J_{exc},F$ (and $J_{exc}',F'$) & Invariant ratio \\
\hline
\end{tabular}
\caption{Symbols mentioned in the paper.}
\label{tab:symbols}
\end{table}

\end{document}